\newcommand{\F}{\mathbb{F}}
\newcommand{\G}{\mathbb{G}}
\newcommand{\N}{\mathbb{N}}
\newcommand{\PP}{\mathbb{P}}
\newcommand{\Z}{\mathbb{Z}}
\newcommand{\calI}{\mathcal{I}}
\newcommand{\calO}{\mathcal{O}}
\DeclareMathOperator{\Aut}{Aut}
\DeclareMathOperator{\Char}{char}
\DeclareMathOperator{\Gal}{Gal}
\DeclareMathOperator{\Gr}{Gr}
\DeclareMathOperator{\im}{im}
\DeclareMathOperator{\Span}{Span}
\DeclareMathOperator{\Sym}{Sym}
\newcommand{\et}{{\operatorname{et}}}
\newcommand{\res}{{\operatorname{res}}}
\newcommand{\M}{\operatorname{M}}
\newcommand{\PGL}{\operatorname{PGL}}
\newcommand{\slantsf}[1]{\textsl{\textsf{#1}}}
\newtheorem{theorem}{Theorem}[section]
\newtheorem{lemma}[theorem]{Lemma}
\newtheorem{corollary}[theorem]{Corollary}
\newtheorem{proposition}[theorem]{Proposition}
\theoremstyle{definition}
\theoremstyle{remark}
\newtheorem{remark}[theorem]{Remark}
\begin{document}

\title[38406501359372282063949 \& all that]{{\large 38406501359372282063949} \& all that: \\ {\small Monodromy of Fano Problems}}

\author{Sachi Hashimoto and Borys Kadets}
\thanks{BK was supported in part by National Science Foundation grant DMS-1601946 and Simons Foundation grant \#550033. SH was supported by a Clare Boothe Luce Fellowship (Henry Luce Foundation) and National Science Foundation grant DGE-1840990.}
\address{Sachi Hashimoto, Department of Mathematics and Statistics, Boston University, 111 Cummington Mall, Boston, MA 02215, USA}
\email{svh@bu.edu}
\urladdr{\url{http://math.bu.edu/people/svh/}}
\address{Borys Kadets, Mathematical Sciences Research Institute, Berkeley, CA, 94720-5070, USA}
\email{bkadets@msri.org}
\urladdr{\url{http://bkadets.github.io/}}

\begin{abstract}
A Fano problem is an enumerative problem of counting $r$-dimensional linear subspaces on a complete intersection in $\PP^n$ over a field of arbitrary characteristic, whenever the corresponding Fano scheme is finite. A classical example is enumerating lines on a cubic surface. We study the monodromy of finite Fano schemes $F_{r}(X)$ as the complete intersection $X$ varies. We prove that the monodromy group is either symmetric or alternating in most cases. In the exceptional cases, the monodromy group is one of the Weyl groups $W(E_6)$ or $W(D_k)$.
\end{abstract}

\maketitle

\section{Introduction}\label{S:introduction}
This paper studies the monodromy groups of Fano problems. Let $K$ be an algebraically closed field of any characteristic. By a \slantsf{Fano problem} we mean the problem of enumerating $r$-dimensional linear spaces on a general complete intersection $X_{[d]} = X_{d_1} \cap \dots \cap X_{d_s} \subset \PP^n$ of multidegree $[d]=(d_1, \dots, d_s)$, $d_i >1$, whenever this number is finite. Harris \cite{HarrisMonodromy} initiated the study of monodromy groups of enumerative problems; in particular, he computed the monodromy of lines on a cubic surface to be the Weyl group $W(E_6)$. The monodromy group reveals the underlying structure (or lack thereof) of the set of solutions to an enumerative problem: lines on a cubic surface have an intersection pairing that has to be preserved by the monodromy group.

 Given a Fano problem associated to the triple $([d],n,r)$, let $M_{[d]}\colonequals \prod_{i=1}^s \mathbb{P}^{\binom{d_i+n}{n}-1}$ be the space parameterizing tuples of hypersurfaces of type $[d]$.  Define the incidence scheme $I$ of $r$-planes in intersection of hypersurfaces by $I \colonequals \{((X_{d_i}), \Lambda) |  \bigcap_i X_{d_i} \supset \Lambda \} \subset   M_{[d]} \times \mathbb{G}(r,n).$ The projection $\pi\colon I \to M_{[d]}$ is generically \'{e}tale (see Theorem \ref{Debarre-Manivel}). The number of $r$-planes on a general complete intersection $X_{[d]}$ is $N([d])\colonequals \deg \pi$. Let $U \subset M_{[d]}$ be an open subscheme such that $\pi$ is finite \'{e}tale over $U$. Fix a basepoint $x \in U$ and let $\phi\colon \pi_1^{\et}(U,x) \to S_{N([d])}$ be the homomorphism to the symmetric group corresponding to the covering $\pi$. The image $G_{[d]} \colonequals \im \phi$ is the \slantsf{Fano monodromy group}. Equivalently, the Fano monodromy group is the Galois group of the extension of function fields $\Gal(K(I) /K(M_{[d]}))$.

Our main theorem is the following statement, which is a combination of Theorems \ref{mainslick} and \ref{mainhairy}. It states that, in most cases, Fano monodromy groups are large.

\begin{theorem}[Main Theorem]\label{intro-main}
Suppose we are not in the case of a cubic surface in $\PP^3$ or the intersection of two quadrics in $\PP^{n}$, so $([d],n,r) \neq ((3),3,1)$ and $[d]\neq (2,2)$. Then the Fano monodromy group is either the alternating group or the symmetric group: $G_{[d]} \supset A_{N\left([d]\right)}$.
\end{theorem}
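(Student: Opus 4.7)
The plan is to follow the classical strategy of Harris: show that $G_{[d]}$ acts $2$-transitively on the fiber of $\pi$ and contains a simple transposition, then conclude by the classical fact that a primitive (in particular $2$-transitive) subgroup of $S_N$ containing a transposition equals $S_N$. In the cases where a transposition is unavailable (e.g.\ when a parity obstruction forces $G_{[d]} \subseteq A_{N([d])}$), I would substitute a $3$-cycle, and a primitive group containing a $3$-cycle contains the alternating group by Jordan's theorem; either outcome yields $G_{[d]} \supset A_{N([d])}$.

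For $2$-transitivity, I would introduce the double incidence variety
$$I^{(2)} := \{(X, \Lambda_1, \Lambda_2) \in M_{[d]} \times \mathbb{G}(r,n)^2 : \Lambda_1 \neq \Lambda_2 \text{ and } \Lambda_1 \cup \Lambda_2 \subset X\}.$$
By standard Galois theory, $G_{[d]}$ is $2$-transitive on fibers of $\pi$ if and only if $I^{(2)}$ is irreducible and dominant over an open subset of $M_{[d]}$. I would stratify $I^{(2)}$ by $j := \dim(\Lambda_1 \cap \Lambda_2)$, ranging over $\{-1, 0, \dots, r-1\}$. Each stratum $I^{(2)}_j$ fibers over a $\PGL_{n+1}$-homogeneous, hence irreducible, locally closed subset of $\mathbb{G}(r,n)^2$, with fiber over a fixed pair $(\Lambda_1, \Lambda_2)$ the linear subspace of $M_{[d]}$ parameterizing intersections containing the reducible subscheme $\Lambda_1 \cup \Lambda_2$. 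Thus each $I^{(2)}_j$ is itself irreducible, and a dimension count identifies the top-dimensional strata; irreducibility of $I^{(2)}$ then reduces to exhibiting a common specialization of the closures of these top strata, typically into the $j = r-1$ stratum of pairs meeting in codimension one.

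For the simple transposition, I would construct a one-parameter degeneration $X_t$ in $M_{[d]}$ such that $F_r(X_0)$ is reduced of length $N([d])$ away from a single $A_1$-node at which exactly two planes of the nearby fiber collide. Local monodromy around $t = 0$ then swaps those two planes and fixes the rest, producing a transposition in $G_{[d]}$. Constructing this family reduces to a tangent/obstruction calculation at the doubled plane, engineered from a special complete intersection containing two $r$-planes that meet in codimension one.

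The main obstacle I anticipate is the irreducibility step. The excluded triples $([d],n,r) = ((3),3,1)$ and $[d] = (2,2)$ are precisely those for which $I^{(2)}$ fails to be irreducible, reflecting genuine extra structure on the Fano scheme (the intersection pairing on the $27$ lines of a cubic surface, and a half-spin structure on the Fano variety of a pencil of quadrics). The dimension-count bookkeeping must therefore carefully pin down when all the strata $I^{(2)}_j$ merge into a single component, and dually must rule out any hidden combinatorial invariant on the generic Fano scheme outside the two exceptional cases. Once $2$-transitivity and the short cycle are in hand, the Jordan argument completes the proof.
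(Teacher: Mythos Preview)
Your approach follows Harris's classical strategy, but the transposition step is precisely where it breaks down, and you have misidentified the main obstacle. The paper states explicitly in the introduction that the key difficulty is the local computation: for many multidegrees $[d]$ the authors \emph{do not know how to find a complete intersection containing finitely many $r$-planes such that one of them has multiplicity $2$}. Your proposed degeneration to a family $X_t$ with a single $A_1$-node in $F_r(X_0)$ is exactly such a construction, and the claim that it ``reduces to a tangent/obstruction calculation at the doubled plane'' glosses over a problem that is, as far as anyone knows, genuinely hard in the complete-intersection setting once $r>1$. Harris's argument works for lines on hypersurfaces because one can explicitly exhibit a hypersurface with a doubled line; no analogous construction is available in general for higher-dimensional planes on complete intersections. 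Substituting a $3$-cycle does not help: the issue is not parity, it is producing \emph{any} controlled collision of planes.

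The paper circumvents this entirely by a global argument based on the classification of finite simple groups. Rather than producing a short cycle, it proves that $G_{[d]}$ is $4$-transitive (or, in a handful of small cases, $3$-transitive with $N([d])$ not of the form $2^a$ or $p^m+1$), and then invokes the CFSG corollary (Theorem~\ref{CFSG}) that such a group of degree $>24$ contains the alternating group. Higher transitivity is obtained by an inductive bootstrap (Lemma~\ref{bootstrap}): one shows that on a general complete intersection any $m$ distinct $r$-planes are linearly \emph{independent}, which lets one pass from $m$-transitivity to $(m+1)$-transitivity by a fiber-dimension argument of the same flavor as the one you sketch for $m=2$. The case $[d]=(2,2,2)$ requires a separate treatment (Section~\ref{S:ThreeQuadrics}) because triples of planes are never linearly independent there.

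Your $2$-transitivity sketch is in the right spirit, but the step of ``exhibiting a common specialization of the closures of these top strata'' is both unnecessary and vague. The paper instead proves directly (Lemma~\ref{NoCrossing}) that the strata $I^{(2)}_j$ with $j\geqslant 0$ fail to dominate $M_{[d]}$, so only the disjoint stratum $j=-1$ survives generically; its irreducibility is then immediate from transitivity of $\PGL_{n+1}$ on disjoint pairs of $r$-planes. This dimension count, not any specialization argument, is where the exceptional cases $((3),3,1)$ and $[d]=(2,2)$ are actually detected (via Lemma~\ref{DM-fanfiction}).
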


The exceptional cases are the Fano monodromy group of  the cubic surface and the Fano monodromy groups for the intersections of two quadrics. These groups are forced to be small because the monodromy must preserve the 
intersection pairing.  For the case of a complete intersection of two quadrics, the monodromy group has to be a subgroup of the Weyl group $W(D_{2k+3})$ 
described in Section \ref{S:two-quadrics}. Nevertheless, we show that this monodromy group is as large as possible:
\begin{theorem}\label{2-quadrics}
	Suppose $\Char K \neq 2$. The Fano monodromy group of $k$-planes on a complete intersection of two quadrics in $\mathbb{P}^{2k+2}$ is the Weyl group $W(D_{2k+3})$.
\end{theorem}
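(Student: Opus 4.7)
The plan is to prove the two inclusions $G_{(2,2)} \subseteq W(D_{2k+3})$ and $G_{(2,2)} \supseteq W(D_{2k+3})$.

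\textbf{Upper bound.} The inclusion $G_{(2,2)} \subseteq W(D_{2k+3})$ is the subject of Section~\ref{S:two-quadrics}: for a generic $X = Q_1 \cap Q_2 \subset \mathbb{P}^{2k+2}$, the pencil $\{\lambda Q_1 + \mu Q_2\}$ has exactly $2k+3$ singular members $Q_{\lambda_1}, \ldots, Q_{\lambda_{2k+3}}$ of corank one, and the two rulings of maximal isotropic $k$-planes on each $Q_{\lambda_i}$ give a canonical involution $\sigma_i$ of $F_k(X)$. These $\sigma_i$ together with the permutation action on the $Q_{\lambda_i}$ generate a copy of $W(D_{2k+3}) = V \rtimes S_{2k+3}$ inside $\Aut(F_k(X))$, where $V \subset \mathbb{F}_2^{2k+3}$ is the dimension-$(2k+2)$ even-weight subspace; this structure is manifestly preserved by the \'etale monodromy.

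\textbf{Lower bound.} For $G_{(2,2)} \supseteq W(D_{2k+3})$ I would use the semidirect-product structure $V \rtimes S_{2k+3}$ and split the argument in three steps. In \emph{Step 1}, I show that $G_{(2,2)}$ surjects onto the quotient $S_{2k+3}$. The discriminant $D(\lambda) = \det(\lambda Q_1 + Q_2) \in K(M_{(2,2)})[\lambda]$ has degree $2k+3$ with roots $\lambda_1, \ldots, \lambda_{2k+3}$, and a standard specialization argument (e.g., degenerating a pair of diagonal quadrics to exhibit both a transposition and a long cycle) shows its Galois group is all of $S_{2k+3}$; since the map $G_{(2,2)} \to S_{2k+3}$ is exactly the permutation action on $\{\lambda_i\}$, it is surjective. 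In \emph{Step 2}, I produce a single nonzero element of $V \cap G_{(2,2)}$ by local monodromy analysis: near a codimension-one locus in $M_{(2,2)}$ where two of the $\lambda_i$ collide at a corank-$2$ quadric (equivalently, where the hyperelliptic spectral curve of the pencil acquires a node), a Picard-Lefschetz computation on $F_k(X)$ realizes a reflection of $W(D_{2k+3})$, and composing it with a suitable transposition from Step 1 yields the non-trivial element $\sigma_i \sigma_j \in V$. In \emph{Step 3}, I invoke the fact that the $S_{2k+3}$-orbit of any weight-$2$ vector spans $V$ over $\mathbb{F}_2$; combined with Steps 1 and 2 this yields all of $W(D_{2k+3})$.

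\textbf{Main obstacle.} The technical core is Step 2, the explicit calculation of local monodromy of the Fano scheme near a pencil degeneration. One must verify that the reflection produced is of ``sign-flip'' type $s_{e_i + e_j}$ (rather than the ``pure permutation'' type $s_{e_i - e_j}$ already provided by Step 1), so that composing with Step 1 gives a genuinely non-trivial kernel element. For $k=1$ (the $16$ lines on a degree-$4$ del Pezzo surface) this is classical \cite{HarrisMonodromy}; the challenge is to make the analysis uniform in $k$, for instance through a miniversal deformation of the corank-$2$ quadric together with careful bookkeeping of the ruling data across the degeneration.
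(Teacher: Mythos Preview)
Your overall architecture matches the paper's: the containment $G_{(2,2)} \subset W(D_{2k+3})$ via the incidence structure, the surjection $\rho\colon G_{(2,2)} \twoheadrightarrow S_{2k+3}$ coming from the discriminant of the pencil (the paper cites \cite{Cohen} for this rather than specializing), and the use of the $S_{2k+3}$-irreducibility of $V=(\Z/2\Z)^{2k+2}$ to promote a single nontrivial kernel element to all of $V$.

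The divergence is exactly at your Step~2, which you correctly flag as the obstacle. The paper bypasses the local Picard--Lefschetz analysis entirely with a one-line group-theoretic argument: since $G_{(2,2)}$ acts transitively on $F_k(Q_1\cap Q_2)$ (this is Proposition~\ref{transitive}, which costs nothing), the order $\#G_{(2,2)}$ is divisible by $\#F_k(Q_1\cap Q_2)=2^{2k+2}$. If $\ker(\rho|_{G_{(2,2)}})$ were trivial, then $\rho$ would embed $G_{(2,2)}$ into $S_{2k+3}$, forcing $2^{2k+2}\mid (2k+3)!$; but the $2$-adic valuation of $(2k+3)!$ is $\sum_{j\geqslant 1}\lfloor (2k+3)/2^j\rfloor < 2k+2$, a contradiction. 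So $V\cap G_{(2,2)}\neq 0$ for free, and your Step~3 finishes. Your degeneration-and-reflection approach would presumably also work, but it trades a two-line divisibility check for a genuinely delicate computation (including the $s_{e_i+e_j}$ vs.\ $s_{e_i-e_j}$ issue you mention); the paper's route is strictly simpler and uniform in $k$ with no case analysis.
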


The monodromy of lines on hypersurfaces of degree larger than $3$ is computed in \cite[Section~III.1]{HarrisMonodromy}: Harris shows that the relevant monodromy groups are symmetric groups. In particular, there is no structure to the set of lines on a hypersurface of degree larger than $3$.  A key ingredient in Harris' proof is a local computation of monodromy for a suitable degeneration of hypersurfaces. The main challenge in proving Theorems \ref{intro-main} and \ref{2-quadrics} is that for many degrees $[d]$ an analogous local computation fails:
we do not know how to find a complete intersection containing finitely many $r$-planes such that one of them is of multiplicity $2$.
Instead we use a purely global argument based on the classification of finite simple groups. In particular, we answer a question posed by Hassett and Tschinkel \cite[Question~6]{HassettTschinkelThreeQuadrics} by computing the monodromy in the case $([d],n,r)=((2,2,2),8,2)$.

While we expect the monodromy to be the symmetric group in all cases of Theorem \ref{intro-main}, our method is unable to distinguish between the alternating group and the symmetric group. Proving the groups are symmetric groups in all cases would require a genuinely new idea. 

Section \ref{Setup} outlines the strategy of the proof and sets up the notation. Section \ref{Numerology} collects several theorems about degrees and dimensions of Fano schemes of complete intersections of use in the rest of the paper. Sections \ref{double-transitivity} and \ref{sec:main} contain the bulk of the proof of Theorem \ref{intro-main}; Section \ref{S:ThreeQuadrics} covers the remaining cases. Finally, Section \ref{S:two-quadrics} contains the proof of Theorem \ref{2-quadrics}. 
\section{Setup and strategy of the proof}\label{Setup}
In this section we set up the notation for the rest of the paper.

Let \[M_{[d]} \colonequals \prod_{i=1}^s \PP^{\binom{d_i+n}{n}-1}\] denote the moduli space of tuples of hypersurfaces of degrees $[d]=(d_1, \dots d_s)$. The moduli space of tuples $(X_{d_i})_{i=1}^s$ that define a complete intersections of type $[d]$ is an open subscheme of $M_{[d]}$. We abuse notation by identifying an element $(X_{d_i})_{i=1}^s \in M_{[d]}$ with the intersection $X_{[d]}=\bigcap_{i=1}^s X_{d_i}$ (assuming that the complete intersection $X_{[d]}$ is equipped with the choice of defining equations).

Consider the incidence scheme $I \subset M_{[d]} \times \G(r,n)$ parameterizing pairs $(X_{[ d]}, \Lambda)$ with $\Lambda \subset X_{[d]}$. By Theorem \ref{Debarre-Manivel}, the projection $\pi\colon I \to M_{ [d]}$ is generically quasi-finite and generically smooth. Since $\pi$ is also proper, $\pi$ is generically \'{e}tale. Let $U \subset M_{ [d]}$ be an open subscheme over which $\pi$ is \'{e}tale. The Fano monodromy group $G_{[d]} \subset S_{N([d])}$ is the monodromy group of the covering $\pi\colon \pi^{-1}(U) \to U$. Since $M_{[d]}$ is normal, the Fano monodromy group is also the Galois group of the function field extension $K(I)/K(M_{[d]})$, and, in particular, does not depend on the choice of $U$.

 Recall that a \slantsf{permutation group} is a triple $(G, S, \varphi)$ consisting of a finite group $G$, a finite set $S$, and an action $\varphi: G \to \Aut(S)$ of $G$ on $S$. The number $\#S$ is the \slantsf{degree} of the permutation group. A permutation group $G$ is called \slantsf{$m$-transitive} if $m\leqslant\#S$ and for any two $m$-tuples of distinct points of $S$ there is an element of $G$ mapping one to the other. The \slantsf{transitivity degree} of $G$ is the largest $m$ for which $G$ is $m$-transitive. 

 The Fano monodromy group $G_{[d]}$ is a permutation group of degree $N([d]).$ We approach the problem of computing $G_{[d]}$ by studying the transitivity degree. The main group-theoretic input to the proofs of Theorems \ref{intro-main} and \ref{2-quadrics} is the following corollary of the classification of finite simple groups.

\begin{theorem}[see Sections 7.3 and 7.4 of \cite{Cameron1999}]\label{CFSG}
	Suppose $G$ is a $3$-transitive permutation group. Suppose also that one of the following holds:
	\begin{enumerate}
		\item  the degree $N$ of $G$ is not a power of $2$, $N \neq p^m+1$ for a prime $p$ and some $m \in \N$, and $N>24$,
		\item the group $G$ is $4$-transitive and $N>24$,	\end{enumerate} 
	then $G$ contains the alternating group $A_N$.
\end{theorem}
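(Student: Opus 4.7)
The plan is to deduce both parts directly from the classification of finite 3-transitive (and 4-transitive) permutation groups, which is a well-known consequence of the classification of finite simple groups. No new argument is needed beyond reading off the relevant lists.

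First I would recall the classification. Besides $S_N$ and $A_N$, the finite 3-transitive permutation groups of degree $N$ fall into three families: (i) projective groups $\PSL_2(q) \leqslant G \leqslant \mathrm{P}\Gamma\mathrm{L}_2(q)$ acting on $\PP^1(\F_q)$, of degree $N = q+1$ for $q = p^m$ a prime power; (ii) affine groups with elementary abelian socle $\F_2^d$ acting on $N = 2^d$ points (this family includes $\operatorname{AGL}_d(2)$, its extensions by field automorphisms, and the sporadic $2^4{:}A_7$ of degree $16$); and (iii) the Mathieu groups $M_{11}, M_{12}, M_{22}, M_{23}, M_{24}$, all of degree at most $24$. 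The 4-transitive classification is even shorter: beyond $A_N$ and $S_N$, only $M_{11}, M_{12}, M_{23}, M_{24}$ occur, again all of degree at most $24$.

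Given this, each part reduces to a short case analysis. For (2), 4-transitivity forces $G \in \{A_N, S_N, M_{11}, M_{12}, M_{23}, M_{24}\}$, and $N > 24$ eliminates the four sporadic cases, leaving $G \supseteq A_N$. For (1), I would check that each arithmetic hypothesis kills one exceptional family: $N > 24$ removes family (iii) (the Mathieu groups); $N$ not a power of $2$ removes family (ii); and $N \neq p^m+1$ removes family (i). Since the three families exhaust the exceptional 3-transitive groups, only $A_N$ and $S_N$ remain.

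The main obstacle is conceptual rather than computational: the statement is really a repackaging of CFSG, and the work amounts to citing the correct enumeration from the literature (here, Cameron's book, Sections 7.3 and 7.4). What is worth flagging is the sharpness of the conditions in (1): the three arithmetic hypotheses on $N$ are in one-to-one correspondence with the three exceptional families, so none of them is redundant, and the cutoff $N > 24$ is tight because of $M_{24}$.
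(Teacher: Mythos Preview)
The paper does not supply its own proof of this theorem; it is stated as a citation to Cameron (Sections~7.3 and~7.4) and used as a black box. Your proposal correctly identifies that the result is a direct reading of the classification of finite $3$- and $4$-transitive permutation groups (a consequence of CFSG), and your case analysis---projective family ruled out by $N\neq p^m+1$, affine family by $N$ not a power of $2$, Mathieu groups by $N>24$---is exactly how one extracts the statement from those tables. One minor imprecision: in the affine family the phrase ``extensions by field automorphisms'' is not quite right (over $\F_2$ there are none; the relevant groups are $\mathrm{AGL}_d(2)$ together with the sporadic $2^4{:}A_7$ inside $\mathrm{AGL}_4(2)$), but since all of them have degree $2^d$ your conclusion is unaffected.
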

\begin{remark}
The number $24$ appears due to the existence of highly-transitive actions of Mathieu groups. 
\end{remark}
A finite \'{e}tale covering $f\colon X \to Y$ of an irreducible scheme $Y$ has transitive monodromy if and only if $X$ is connected, since the category of finite \'{e}tale coverings is equivalent to the category of $\pi_1^{\et}(X,x)$-sets; see \cite{SGA1}{\ Section V.7}. For example, to show that $G_{[d]}$ is transitive we will show that the second projection $\pi_2\colon I \to \G(r,n)$ is a proper dominant map with irreducible and equidimensional fibers. An $m$-transitive group is $(m+1)$-transitive if and only if the stabilizer of an $m$-tuple of points $T\subset S$ acts transitively on $S \setminus T$. Therefore, to prove higher transitivity we restrict the covering $\pi\colon I \to M_{[d]}$ to various closed subschemes $M' \subset M_{[d]}$, such that the monodromy of the restricted covering $\pi^{-1}(M') \to M'$ is contained in a stabilizer of a set of points.  In this way we reduce calculating transitivity degrees to describing irreducible components of the restricted coverings. 
\section{Numerology}\label{Numerology}

In this section we discuss the Fano scheme of a complete intersection in $\PP^n$ and its associated significant numbers. In particular, we present a formula of Debarre and Manivel \cite{DebarreManivel} for the expected dimension of the Fano scheme and discuss when it is zero. We also apply a (different) formula of Debarre and Manivel to obtain a lower bound on the number of $r$-planes in a complete intersection, when the Fano scheme is zero-dimensional.

Throughout, we work over an algebraically closed field $K$ of any characteristic. Let $X_{[d]} \colonequals X_{d_1} \cap \dots \cap X_{d_s}$ be the intersection of $s$ hypersurfaces $X_{d_i}$ in $\PP^n$, of degrees $[d] \colonequals (d_1, \dots, d_s)$ with $d_i>1$ for all $i$. The Fano scheme $F_r(X_{[ d]})$ is the moduli scheme of $r$-planes contained in $X_{[d]}$; it is a subscheme of the Grassmannian $\G(r,n)$. The expected dimension of $F_r(X_{[ d]})$ is \begin{align*}\delta = \delta({[d]}, n, r) &\colonequals (n - r)(r+1) -\binom{{[d]}+ r}{r}\\ &\ = (n-r)(r+1) - \sum_{i=1}^s \binom{d_i+r}{r}  .\end{align*} Let $\delta_{-} \colonequals \min\{ \delta, n - 2r - s\}$.
The following theorem is \cite[Theorem~2.1]{DebarreManivel}.
\begin{theorem}[Debarre-Manivel]\label{Debarre-Manivel}
\hfill
\begin{enumerate}
\item If $\delta_{-} < 0$, then for a general $X_{[d]}$ the Fano scheme $F_r(X_{ [d]})$ is empty. \label{Debarre-Manivel1}

\item If $\delta_{-} \geqslant 0$, then for a general $X_{[d]}$ the Fano scheme $F_r(X_{[d]})$ is smooth of dimension $\delta$.

\end{enumerate}

\end{theorem}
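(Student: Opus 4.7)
The plan is to compare the two projections of the incidence variety $I \subset M_{[d]} \times \G(r,n)$. First I would analyze $\pi_2 \colon I \to \G(r,n)$: for fixed $\Lambda \in \G(r,n)$, the fiber $\pi_2^{-1}(\Lambda)$ consists of those tuples $(X_{d_i})$ each containing $\Lambda$. Since $\calO_{\PP^n}(d_i)$ is generated by global sections, the restriction map $\HH^0(\PP^n, \calO(d_i)) \surjects \HH^0(\Lambda, \calO_\Lambda(d_i))$ is surjective, so its kernel has codimension $\binom{d_i+r}{r}$. Hence the fiber is a product of linear subspaces and $\pi_2$ is a Zariski-locally trivial bundle of projective spaces over the smooth irreducible base $\G(r,n)$; it follows that $I$ itself is smooth and irreducible of dimension $\dim M_{[d]} + \delta$.

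For part (1), the case $\delta < 0$ is immediate: $\dim I < \dim M_{[d]}$ forces $\pi_1$ to be non-dominant, so the generic $F_r(X_{[d]}) = \pi_1^{-1}(X_{[d]})$ is empty. The remaining case $\delta \geqslant 0$ but $n - 2r - s < 0$ requires more work. I would interpret $F_r(X_{[d]})$ as the zero locus on $\G(r,n)$ of the section of the vector bundle $E \colonequals \Directsum_i \Sym^{d_i}(S^*)$ cut out by the defining forms of $X_{[d]}$, where $S$ denotes the tautological rank-$(r+1)$ subbundle. The bundle $E$ has rank $(r+1)(n-r) - \delta$, and the top Chern class $c_{\rk E}(E) \in \HH^*(\G(r,n))$ vanishes precisely when $n - 2r - s < 0$ --- the prototypical case being $s$ quadrics, for which isotropic $r$-planes require $n \geqslant 2r + s$. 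This vanishing forces the zero locus of a generic section, hence the generic $F_r(X_{[d]})$, to be empty.

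For part (2), assume both $\delta \geqslant 0$ and $n - 2r - s \geqslant 0$. Since each $\Sym^{d_i}(S^*)$ is a quotient of a trivial bundle, $E$ is globally generated. A Bertini-type theorem for zero loci of generic sections of globally generated vector bundles then gives that the zero locus of a generic section is smooth of the expected codimension $\rk E$ wherever nonempty, and nonemptiness follows from the nonvanishing of $c_{\rk E}(E)$. To verify smoothness of the generic fiber of $\pi_1$ in a characteristic-independent way, I would instead use the normal bundle sequence
\[0 \to N_{\Lambda/X_{[d]}} \to N_{\Lambda/\PP^n} \to N_{X_{[d]}/\PP^n}|_\Lambda \to 0\]
together with $N_{\Lambda/\PP^n} \isom \calO_\Lambda(1)^{\oplus (n-r)}$ and $N_{X_{[d]}/\PP^n}|_\Lambda \isom \Directsum_i \calO_\Lambda(d_i)$; the vanishing of $\HH^1(\PP^r, \calO(k))$ for $k \geqslant 0$ then yields $h^0(N_{\Lambda/X_{[d]}}) = \delta$ and $h^1(N_{\Lambda/X_{[d]}}) = 0$, giving smoothness and the correct dimension.

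The main obstacle is the case $\delta \geqslant 0$, $n - 2r - s < 0$: naive dimension counting does not suffice, and the argument relies on Chern class vanishing in Schubert calculus (or an equivalent tangent-space obstruction at a general point of $I$). A secondary subtlety is that Bertini-type smoothness statements require care in positive characteristic, which the direct normal bundle computation above handles.
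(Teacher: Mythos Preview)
The paper does not prove this theorem: it is quoted verbatim as \cite[Theorem~2.1]{DebarreManivel} and used as a black box. So there is nothing to compare your proposal against in this paper.

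That said, your outline is the standard one and is essentially what Debarre and Manivel do. Two remarks on the sketch itself. First, your normal bundle argument for part~(2) has a gap: from the long exact sequence attached to
\[
0 \to N_{\Lambda/X_{[d]}} \to N_{\Lambda/\PP^n} \to N_{X_{[d]}/\PP^n}|_\Lambda \to 0,
\]
the vanishing of $\HH^1(N_{\Lambda/\PP^n})$ only gives $\HH^1(N_{\Lambda/X_{[d]}}) \isom \coker\bigl(\HH^0(N_{\Lambda/\PP^n}) \to \HH^0(N_{X_{[d]}/\PP^n}|_\Lambda)\bigr)$. You still need this map to be surjective, which is a condition on the Jacobian of the defining equations restricted to $\Lambda$ and is \emph{not} automatic; it must be verified for a generic $X_{[d]}$ containing $\Lambda$ (equivalently, one exhibits a single smooth point of the relevant fiber of $\pi_1$). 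This is exactly the characteristic-free replacement for generic smoothness, but it needs an actual calculation, not just $\HH^1(\calO_{\PP^r}(k))=0$. Second, you have correctly identified the case $\delta \geqslant 0$, $n-2r-s<0$ as the genuine content: the Schubert-calculus vanishing of $c_{\rk E}(E)$ in this range is the nontrivial input, and your proposal does not supply it beyond the assertion.
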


We are interested in the case when  $\dim F_r(X_{[d]}) =0$ for a general $X_{[d]}$. By Theorem \ref{Debarre-Manivel} this is equivalent to $\delta_{-} =\delta=0$. This implies the equality 
\begin{equation}\label{main-relation}
(n-r)(r+1)=\binom{[d]+r}{r}. \tag{$\star$}
\end{equation}

In the proof of Lemma \ref{NoCrossing} we use the following classification of complete intersections with $n-2r-s=0$. 
\begin{lemma}\label{DM-fanfiction}
Assume $\delta =0$. We have that $n - 2r -s  = 0$ if and only if either $s =1$, $d = 3$, and $r=1$, or $s= 2$ and $[d] = (2,2)$. 

\end{lemma}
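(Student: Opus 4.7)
The plan is to substitute $n = 2r + s$ into the dimension equation $\delta = 0$ and reduce to a short Diophantine analysis. Since the reverse implication is an immediate verification in each listed case, I focus on the forward direction. Combining $\delta = 0$ with $n = 2r + s$ in the formula $(n-r)(r+1) = \sum_{i=1}^s \binom{d_i + r}{r}$ yields
$(r+s)(r+1) = \sum_{i=1}^s \binom{d_i + r}{r}$,
a single equation in the data $(r, s, [d])$ subject to the constraints $d_i \geq 2$ and $r \geq 1$ (the case $r = 0$ lies outside the Fano problems considered in this paper, for otherwise trivial degenerate solutions such as $r = 0$ with arbitrary $s, [d]$ and $n = s$ appear).

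The main lever is the elementary bound $\binom{d_i + r}{r} \geq \binom{r+2}{2} = (r+2)(r+1)/2$, with equality iff $d_i = 2$, which holds because $\binom{d+r}{r}$ is strictly increasing in $d$ for $r \geq 1$. Summing over $i$ and dividing by $(r+1)$ gives $r + s \geq s(r+2)/2$, i.e.\ $r(2 - s) \geq 0$. For $r \geq 1$ this forces $s \leq 2$, sharply restricting the cases that remain.

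For $s = 2$, the bound must be an equality term-by-term, so $d_1 = d_2 = 2$, giving the second listed case (and the original equation is then automatically satisfied for every $r$). For $s = 1$, the remaining equation is $(r+1)^2 = \binom{d+r}{r}$, which I would dispatch by splitting on $d$: the choice $d = 2$ collapses to $r = 0$; the choice $d = 3$ gives $6(r+1) = (r+3)(r+2)$, i.e.\ $r(r-1) = 0$, whose only solution with $r \geq 1$ is $r = 1$; and for $d \geq 4$ the lower bound $\binom{r+4}{4}$ already exceeds $(r+1)^2$ for every $r \geq 1$, via the cubic-vs-linear inequality $(r+2)(r+3)(r+4) > 24(r+1)$ (easily checked term by term for $r = 1, 2$ and then clear by growth rate).

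I do not foresee any serious obstacle: the heart of the argument is the single binomial inequality combined with the substitution $n = 2r + s$. The only mild subtlety is book-keeping the implicit assumption $r \geq 1$, since otherwise a whole family of trivial solutions reappears.
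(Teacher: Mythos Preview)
Your argument is correct and follows the same overall strategy as the paper: substitute $n-r=r+s$ into \eqref{main-relation}, use the lower bound $\binom{d_i+r}{r}\geqslant\binom{r+2}{2}$ to force $s\leqslant 2$, and then analyze the two remaining cases. The one genuine difference is in the $s=1$ subcase: the paper invokes Erd\H{o}s's theorem on binomial coefficients that are perfect squares to rule out all solutions of $(r+1)^2=\binom{d+r}{r}$ except $(d,r)=(3,1)$, whereas you give a direct elementary argument by splitting on $d$ and using the inequality $(r+2)(r+3)(r+4)>24(r+1)$ for $r\geqslant 1$ to handle $d\geqslant 4$. Your route is more self-contained and avoids the external reference; the paper's citation is shorter but imports a nontrivial result. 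Both are valid, and your case analysis is clean.
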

\begin{proof}
As $\delta =0$ we can assume (\ref{main-relation}). We substitute $n - r = s+r$ into (\ref{main-relation}), and bound the right hand side from below by $s\binom{r+2}{r}$ to obtain $2 \geqslant s$. When $s = 1$ we obtain the equation \[(1+r)^2 = \binom{d +r}{r} \] from $\delta=0$. By considering the binomial coefficients that are perfect squares, we see by \cite{Erdos} that the only possibility is $r=1$ and $d = 3$. When $s = 2$, (\ref{main-relation}) becomes \[ (1+r)(2+r) = \binom{d_1 +r}{r} +  \binom{d_2 +r}{r}.\] The right side grows strictly monotonically in $d_i$, and is smallest when $d_1 = d_2 = 2$. On the other hand, when $d_1 = d_2 = 2$ we achieve equality.
\end{proof}
We would also like to determine the degree of the Fano scheme of a general complete intersection, which corresponds to the number of $r$-planes in $X_{[d]}$ when $\delta = 0$. For a single hypersurface $X_{d_i}$, the class of the Fano scheme $[F_r(X_{d_i})]$ in the Chow ring $A^{\binom{d_i+ r}{r}}( \G(r,n))$ is given by the top Chern class $c_{\binom{d_i+ r}{r}} \Sym^{d_i} S^\vee $, where $S$ is the tautological bundle on the Grassmannian. To compute the class of the complete intersection, we take the product $F_r(X_{[d]})=\prod_{i=1}^s[F_r(X_{d_i})]= \prod_{i=1}^{s} c_{\binom{d_i+ r}{r}} \Sym^{d_i} S^\vee. $ These Chern classes are readily computable using the splitting principle. Debarre and Manivel give a formula for the product of Chern classes using properties of symmetric functions.
\begin{theorem}[Theorem 4.3 of \cite{DebarreManivel}]
\label{DMthm2}
Suppose $X_{[d]}$ is a complete intersection with\\ $\dim F_r(X_{[d]})=\delta=0.$ 
Define \[Q_{r, d_i} \colonequals \prod_{a_0+ \dots + a_r = d_i} (a_0 x_0 + \dots + a_r x_r), \, \, a_i  \in \Z_{\geqslant 0},\] and $Q_{r, [d]} \colonequals \prod_{i=1}^s Q_{r, d_i}$. Let $V_r \colonequals \prod_{0 \leqslant i < j \leqslant r} (x_i- x_j)$ denote the Vandermonde polynomial. Then the degree of $F_r(X_{[d]})$ is the coefficient of the monomial $x_0^nx_1^{n-1}\dots x_r^{n-r}$ in $Q_{r, [d]} \cdot V_r.$
\end{theorem}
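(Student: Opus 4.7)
The plan is to compute $\deg F_r(X_{[d]})$ as an intersection number on $\G(r,n)$ by expressing $[F_r(X_{[d]})]$ in terms of the Chern roots of the dual $S^\vee$ of the tautological subbundle, and then applying the standard integration formula for symmetric polynomials on the Grassmannian.

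First, I would observe that the defining equation $\sigma_i \in H^0(\PP^n, \calO(d_i))$ of $X_{d_i}$ restricts fiberwise to a global section of the rank $\binom{d_i+r}{r}$ bundle $\Sym^{d_i} S^\vee$ on $\G(r,n)$, whose vanishing locus is precisely $F_r(X_{d_i})$; hence $[F_r(X_{d_i})] = c_{\binom{d_i+r}{r}}(\Sym^{d_i} S^\vee)$. Under the hypothesis $\delta = 0$, Theorem \ref{Debarre-Manivel} ensures these zero loci meet properly for generic $X_{[d]}$, so
\[ [F_r(X_{[d]})] = \prod_{i=1}^s c_{\binom{d_i+r}{r}}\bigl(\Sym^{d_i} S^\vee\bigr) \in A^{(r+1)(n-r)}\bigl(\G(r,n)\bigr). \]

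Next, I would apply the splitting principle: if $S^\vee$ formally splits into line bundles with Chern roots $x_0,\dots,x_r$, then $\Sym^{d_i} S^\vee$ splits as a sum of line bundles with Chern roots $a_0 x_0 + \cdots + a_r x_r$ indexed by tuples $(a_j)$ of nonnegative integers with $\sum a_j = d_i$, so $c_{\binom{d_i+r}{r}}(\Sym^{d_i} S^\vee) = Q_{r, d_i}$. Multiplying over $i$ yields $[F_r(X_{[d]})] = Q_{r,[d]}(x_0,\dots,x_r)$, a symmetric polynomial of total degree $\sum_i \binom{d_i+r}{r} = (r+1)(n-r)$ (using $\delta = 0$), matching $\dim \G(r,n)$.

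The final step is the classical integration formula on the Grassmannian: for any symmetric polynomial $P$ of degree $(r+1)(n-r)$ in $x_0,\dots,x_r$,
\[ \int_{\G(r,n)} P\bigl(c(S^\vee)\bigr) = \bigl[x_0^n x_1^{n-1}\cdots x_r^{n-r}\bigr]\bigl( P \cdot V_r \bigr), \]
applied with $P = Q_{r,[d]}$. A clean derivation expands $P$ in the Schur basis indexed by partitions $\lambda$ fitting in the $(r+1)\times(n-r)$ rectangle; by Weyl's bialternant formula $s_\lambda \cdot V_r$ is an alternating polynomial, and the monomial $x_0^n x_1^{n-1}\cdots x_r^{n-r}$ occurs only from $\lambda = ((n-r)^{r+1})$, whose Schur class is the top Schubert class of $\G(r,n)$ with $\int s_\lambda = 1$. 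The main obstacle is the integration formula itself, for which I would cite a standard reference (e.g., Fulton's \emph{Intersection Theory} or the original \cite{DebarreManivel}); a quick sanity check on degrees, $\deg(Q_{r,[d]}\,V_r) = (r+1)(n-r) + \binom{r+1}{2} = (r+1)n - \binom{r+1}{2} = \deg(x_0^n x_1^{n-1}\cdots x_r^{n-r})$, confirms that the formula is at least well-posed.
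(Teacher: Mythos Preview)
Your proposal is correct and follows the standard route, but note that the paper does not actually prove this statement: it is quoted verbatim as Theorem~4.3 of \cite{DebarreManivel} and used as a black box. The paragraph preceding the theorem in the paper does sketch exactly the ingredients you use (the class $[F_r(X_{d_i})]$ as a top Chern class of $\Sym^{d_i}S^\vee$, the splitting principle), so your argument is the natural unpacking of that sketch and agrees with the original proof in \cite{DebarreManivel}.
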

\begin{lemma}
\label{divisible}
When $F_r(X_{[d]})$ has dimension $\delta=0$, the degree of the Fano scheme $F_r(X_{[d]})$ is divisible by $\prod_{i=1}^s d_i^{r+1}$.
\end{lemma}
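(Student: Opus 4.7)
The strategy is to apply Theorem \ref{DMthm2} directly and exploit an obvious factorization of $Q_{r,[d]}$. Recall that $Q_{r, d_i} = \prod_{a_0+\dots+a_r = d_i}(a_0 x_0 + \dots + a_r x_r)$ is indexed by compositions of $d_i$ into $r+1$ nonnegative parts. Among these compositions are the $r+1$ ``extremal'' ones of the form $(0, \dots, 0, d_i, 0, \dots, 0)$ with $d_i$ in position $j$, for $j = 0, 1, \dots, r$. These contribute the factors $d_i x_0, d_i x_1, \dots, d_i x_r$ to $Q_{r, d_i}$.

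Therefore in $\Z[x_0, \dots, x_r]$ we have the divisibility
\[
d_i^{r+1} \cdot x_0 x_1 \cdots x_r \,\Big|\, Q_{r, d_i},
\]
and multiplying over $i = 1, \dots, s$ gives
\[
\prod_{i=1}^{s} d_i^{r+1} \cdot (x_0 x_1 \cdots x_r)^{s} \,\Big|\, Q_{r, [d]}
\]
in $\Z[x_0, \dots, x_r]$. Writing $Q_{r,[d]} \cdot V_r = \bigl(\prod_{i=1}^s d_i^{r+1}\bigr) \cdot (x_0 \cdots x_r)^{s} \cdot \widetilde{P}$ with $\widetilde{P} \in \Z[x_0, \dots, x_r]$, the coefficient of the monomial $x_0^n x_1^{n-1} \cdots x_r^{n-r}$ in $Q_{r,[d]} \cdot V_r$ equals $\prod_{i=1}^s d_i^{r+1}$ times the coefficient of $x_0^{n-s} x_1^{n-1-s} \cdots x_r^{n-r-s}$ in $\widetilde{P}$, which is an integer. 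By Theorem \ref{DMthm2}, the former coefficient is $\deg F_r(X_{[d]})$, which therefore is divisible by $\prod_{i=1}^s d_i^{r+1}$.

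One should briefly verify that the exponents $n - j - s$ are nonnegative so the coefficient extraction makes sense: since $\delta = \delta_- = 0$ we have $n - 2r - s \geqslant 0$, hence $n - r - s \geqslant r \geqslant 0$, so each exponent $n - j - s$ is nonnegative for $0 \leqslant j \leqslant r$. There is no substantive obstacle here; the only thing to get right is the bookkeeping of which compositions of $d_i$ give the claimed monomial factors of $Q_{r,d_i}$.
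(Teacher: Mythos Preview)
Your proof is correct and follows essentially the same approach as the paper: both identify the factors $d_i x_j$ in $Q_{r,d_i}$ coming from the extremal compositions $(0,\dots,0,d_i,0,\dots,0)$, and conclude that the integer $\prod_{i=1}^s d_i^{r+1}$ divides every coefficient of $Q_{r,[d]}\cdot V_r$. Your verification that $n-j-s\geqslant 0$ is harmless but unnecessary, since a negative exponent would simply make the extracted coefficient zero, which is still divisible by anything.
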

\begin{proof}
By Theorem \ref{DMthm2}, it suffices to prove that the coefficient of $x_0^nx_1^{n-1}\dots x_r^{n-r}$ in $Q_{r, {[d]}}$ is divisible by $\prod_{i=1}^s d_i^{r+1}$. For each $i = 1, \dots, s$ and $j=0, \dots, r$ the terms $d_i x_j$ divide $Q_{r, d_i}$ for all $j = 0, \dots, r$, and therefore divide $Q_{r, {[d]}}$. Collecting terms, we have that $\prod_{i=1}^s d_i^{r+1}$ divides $Q_{r, {[d]}}$.
\end{proof}
\begin{corollary}\label{NoMathieu}
Let $X_{[d]}$ be a complete intersection in $\PP^n$ such that $\dim F_r(X_{[d]})=\delta = 0$. Then $\deg (F_r(X_{[d]}))> 24$ except when $[d] = (2,2)$, $n = 4$, and $r = 1$.

\end{corollary}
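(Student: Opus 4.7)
The plan is to exploit the divisibility statement of Lemma \ref{divisible}: since $\prod_{i=1}^s d_i^{r+1}$ divides $N \colonequals \deg F_r(X_{[d]})$ and each $d_i \geq 2$, one immediately gets
\[
N \geq \prod_{i=1}^s d_i^{r+1} \geq 2^{s(r+1)}.
\]
In particular, once $s(r+1) \geq 5$ we have $N \geq 32 > 24$ and the conclusion is immediate. Under the paper's standing convention that $r \geq 1$, the only remaining cases are the four pairs $(s,r)\in\{(1,1),(1,2),(1,3),(2,1)\}$, and I would treat them by direct enumeration.

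For each such pair I would combine the arithmetic bound $\prod d_i^{r+1} \leq 24$ with the two geometric constraints $\delta=0$ (which pins down $n$ whenever an integer solution exists) and $n-2r-s\geq 0$ (needed for the generic Fano scheme to be nonempty, by Theorem \ref{Debarre-Manivel}). For $(s,r)=(1,1)$ the surviving case is $d=3,\ n=3$, the cubic surface with its classical $N=27>24$. For $(s,r)=(1,2)$ the only arithmetic candidate $d=2,\ n=4$ has $n-2r-s=-1<0$ and is therefore ruled out. For $(s,r)=(1,3)$ the candidate $d=2$ yields no integer $n$ satisfying $\delta=0$. Finally, $(s,r)=(2,1)$ together with $d_1 d_2 \leq 4$ forces $[d]=(2,2),\ n=4$, which is the exception allowed in the statement.

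The entire argument is a short finite case analysis layered on top of Lemma \ref{divisible}; I do not anticipate a serious obstacle. The only external fact required is the classical count of $27$ lines on a cubic surface, and since $27>24$ it does not create a genuine exception. Any additional cases one might worry about (such as virtual counts coming from empty Fano schemes) are excluded by the hypothesis $\dim F_r(X_{[d]})=0$, which forces $F_r$ to be nonempty and hence genuinely of positive length.
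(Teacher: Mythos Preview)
Your proposal is correct and follows essentially the same route as the paper: invoke Lemma \ref{divisible} to dispose of all cases with $\prod d_i^{r+1}>24$, then run a finite check on the handful of remaining $(s,r,[d])$ combinations using $(\star)$ and the constraint $n-2r-s\geq 0$. Your organization by the pairs $(s,r)$ is slightly cleaner, and in fact your treatment of $(s,r)=(1,2)$, $d=2$ is more careful than the paper's (there \emph{is} an integer solution $n=4$ to $(\star)$, so it is the inequality $n-2r-s\geq 0$, not ``parity'', that actually eliminates it); but the substance of the two arguments is the same.
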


\begin{proof}
For large enough $[d]$, we know $\prod_{i=1}^s d_i^{r+1} > 24$, and by the previous lemma we can conclude that the degree is greater than $24$. The remaining small cases are a subset of $d= (2)$ and $r=1, 2,$ or $ 3$, $d =  (2,2)$ and $r =1$, $d= (3)$ and $r = 1$, and $d =  (4)$ and $r=1$.

We rule out the remaining cases using (\ref{main-relation}). The case $d= (4), r=1$ as well as the cases $d= (2), r= 1, 2, 3$ are impossible by parity considerations. For $d= (3), r = 1$, we have the case of a cubic hypersurface in $\PP^3$ which has degree $27$. For $d =  (2,2),r =1$, we have the case of a degree $4$ del Pezzo in $\PP^4$. We compute $\deg F_1(X_{(2,2)}) = 16$.
\end{proof}

\section{Double transitivity}\label{double-transitivity}

\begin{proposition}\label{transitive}
	For any Fano problem $([d],n,r)$, the Fano monodromy group $G_{[d]}$ is transitive.
\end{proposition}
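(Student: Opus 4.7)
The plan is to prove that the incidence scheme $I$ is irreducible; since $\pi^{-1}(U)$ is a dense open subscheme of $I$, this forces $\pi^{-1}(U)$ to be connected, and hence the monodromy group $G_{[d]}$ of the finite \'{e}tale cover $\pi\colon\pi^{-1}(U)\to U$ acts transitively. As flagged at the end of Section \ref{Setup}, the right way to access irreducibility of $I$ is through the \emph{other} projection $\pi_2\colon I\to \G(r,n)$, which has far simpler fibers than $\pi$.

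First I would analyze the fiber $\pi_2^{-1}(\Lambda)$ over a point $\Lambda\in\G(r,n)$. It consists of tuples $(X_{d_1},\dots,X_{d_s})\in M_{[d]}$ with $\Lambda\subset X_{d_i}$ for each $i$. For each $i$ the condition $\Lambda\subset X_{d_i}$ is that the defining section $F_i\in H^0(\PP^n,\OO(d_i))$ lies in the kernel of the restriction map $H^0(\PP^n,\OO(d_i))\to H^0(\Lambda,\OO(d_i))$, which is a linear condition on the coefficients of $F_i$ cutting out codimension $\binom{d_i+r}{r}$. Hence the fiber $\pi_2^{-1}(\Lambda)$ is the product $\prod_{i=1}^{s}\PP^{\binom{d_i+n}{n}-1-\binom{d_i+r}{r}}$, in particular irreducible of dimension independent of $\Lambda$.

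Next I would package this globally. Let $S$ denote the tautological subbundle on $\G(r,n)$ and consider, for each $i$, the evaluation map $H^0(\PP^n,\OO(d_i))\tensor \OO_{\G(r,n)} \to \Sym^{d_i}S^\vee$. This map is surjective on fibers (as $r<n$), so its kernel $\calK_i$ is a vector bundle, and the projectivization of $\calK_i$ is the relative incidence scheme of a single hypersurface of degree $d_i$. The full incidence scheme $I$ is the fiber product $\PP(\calK_1)\times_{\G(r,n)}\cdots\times_{\G(r,n)}\PP(\calK_s)$, hence a tower of projective bundles over the smooth irreducible base $\G(r,n)$. Thus $I$ is irreducible (and smooth). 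The map $\pi_2$ is proper and surjective, so the hypotheses \textbf{proper, dominant, with irreducible equidimensional fibers} spelled out in Section \ref{Setup} are verified.

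Finally, since $I$ is irreducible and $U\subset M_{[d]}$ is a non-empty open over which $\pi$ is \'{e}tale, the preimage $\pi^{-1}(U)\subset I$ is a non-empty open subscheme of an irreducible scheme, hence irreducible and a fortiori connected. The equivalence between finite \'{e}tale covers of $U$ and finite $\pi_1^{\et}(U,x)$-sets then gives transitivity of $G_{[d]}$ on the generic fiber. There is no real obstacle in this argument: the key point is the trick of swapping projections, which replaces the complicated map $\pi$ by the projective-bundle-like map $\pi_2$ for which irreducibility is essentially tautological.
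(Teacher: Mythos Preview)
Your proof is correct and follows essentially the same approach as the paper: both argue that $I$ is irreducible by analyzing the second projection $\pi_2\colon I\to\G(r,n)$ and observing that its fibers are irreducible of constant dimension. The only cosmetic difference is that you identify $I$ explicitly as a tower of projective bundles over $\G(r,n)$, whereas the paper invokes the transitive $\PGL_{n+1}(K)$-action on $\G(r,n)$ to see that all fibers of $\pi_2$ are isomorphic and then applies the general fact about proper maps with irreducible equidimensional fibers over an irreducible base.
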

\begin{proof}
 	Let $U \subset M_{[d]}$ be an open subscheme such that $\pi$ is \'{e}tale over $U$.	We need to show that $\pi^{-1}(U)$ is connected. It suffices to show that $I$ is irreducible. Consider the projection $\pi_2\colon I \to \G(r,n)$. Given $\Lambda \in \G(r,n)$ the fiber $\pi_2^{-1}(\Lambda)$ is the moduli of intersections of hypersurfaces $X_{[d]}$ containing $\Lambda$. Given $\Lambda_1, \Lambda_2 \in \G(r,n)$, choose $g \in \PGL_{n+1} (K)$ such that $g\Lambda_1=\Lambda_2$. The automorphism $g$ induces an isomorphism of fibers $\pi_2^{-1}(\Lambda_1) \simeq \pi_2^{-1}(\Lambda_2)$. So $\pi_2$ is proper over an irreducible scheme, and has irreducible equidimensional fibers; thus the total space $I$ is irreducible.
\end{proof}

We will use an argument analogous to the proof of Proposition \ref{transitive} to show that $G_{[d]}$ is $4$-transitive for most $[d]$; combined with Theorem \ref{CFSG} this will quickly lead to the proof of Theorem \ref{mainslick}. Double transitivity is an input for proving $4$-transitivity, and we present it separately in Theorem \ref{2-trans}. We precede the proof of Theorem \ref{2-trans} with two lemmas that replace the use of the $\PGL_{n+1}(K)$-action in the proof of Proposition \ref{transitive}.

\begin{lemma}\label{dimension-count-pair}
	Suppose $\Lambda_1, \Lambda_2$ are linear subspaces of $\PP^n$, $\dim \Lambda_1= \dim \Lambda_2=r$, $\dim \Lambda_1 \cap \Lambda_2 = m$. Let $\calI_{\Lambda_1 \cup \Lambda_2}$ denote the ideal sheaf of $\Lambda_1 \cup \Lambda_2 \subset \PP^n$.
	Then for any $d\geqslant 2$ \[\dim H^0(\PP^n, \calI _{\Lambda_1 \cup \Lambda_2} (d) )= \binom{d+n}{n} - 2 \binom{d+r}{r} + \binom{d+m}{m}.\]
\end{lemma}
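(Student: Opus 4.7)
The plan is to compute the dimension using the Mayer-Vietoris exact sequence of ideal sheaves
\[0 \to \calI_{\Lambda_1 \cup \Lambda_2}(d) \to \calI_{\Lambda_1}(d) \directsum \calI_{\Lambda_2}(d) \to \calI_{\Lambda_1 \cap \Lambda_2}(d) \to 0,\]
in which the first map is the diagonal and the second sends $(a,b) \mapsto a - b$. Exactness holds because for linear subspaces one has $\calI_{\Lambda_1 \cup \Lambda_2} = \calI_{\Lambda_1} \intersect \calI_{\Lambda_2}$ (by the very definition of scheme-theoretic union) and $\calI_{\Lambda_1 \cap \Lambda_2} = \calI_{\Lambda_1} + \calI_{\Lambda_2}$, since the scheme-theoretic intersection of two linear subspaces is cut out by the union of their defining linear forms.

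After taking global sections, the standard sequence $0 \to \calI_\Lambda(d) \to \calO_{\PP^n}(d) \to \calO_\Lambda(d) \to 0$ yields, for any $k$-plane $\Lambda \subset \PP^n$ and any $d \geqslant 0$,
\[\dim H^0(\PP^n, \calI_\Lambda(d)) = \binom{d+n}{n} - \binom{d+k}{k}.\]
Applying this to $\Lambda_1$, $\Lambda_2$, and $\Lambda_1 \intersect \Lambda_2$, the claimed formula is precisely the alternating sum of dimensions in the Mayer-Vietoris sequence, provided that the map
\[H^0(\calI_{\Lambda_1}(d)) \directsum H^0(\calI_{\Lambda_2}(d)) \to H^0(\calI_{\Lambda_1 \cap \Lambda_2}(d))\]
is surjective on global sections.

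The main obstacle is verifying this surjectivity, since a sheaf-level surjection need not descend to $H^0$. I would handle it in adapted coordinates: pick $x_0, \ldots, x_n$ so that $\Lambda_1 \intersect \Lambda_2 = V(x_{m+1}, \ldots, x_n)$, $\Lambda_1 = V(x_{r+1}, \ldots, x_n)$, and $x_{m+1}, \ldots, x_r$ belong to $\calI_{\Lambda_2}$ (explicitly, one may take $\Lambda_2 = \operatorname{span}(e_0, \ldots, e_m, e_{r+1}, \ldots, e_{2r-m})$). Any degree-$d$ form $f \in (x_{m+1}, \ldots, x_n)$ decomposes as $\sum_{i=m+1}^{n} x_i f_i$; splitting the sum at $i = r$ writes $f$ as the sum of an element of $\calI_{\Lambda_2}$ (the terms with $i \leqslant r$) and an element of $\calI_{\Lambda_1}$ (the terms with $i > r$), which yields the required surjectivity. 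No further difficulty is anticipated.
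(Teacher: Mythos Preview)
Your proof is correct and takes a somewhat different route from the paper's. The paper instead shows that the restriction map
\[
\phi\colon H^0(\PP^n,\calI_{\Lambda_1}(d)) \longrightarrow H^0(\Lambda_2,\calI_{\Lambda_1\cap\Lambda_2}(d))
\]
is surjective, by a geometric construction: given a hypersurface $Z\subset\Lambda_2$ through $\Lambda_1\cap\Lambda_2$, they take the cone over $Z$ with apex a suitably chosen $(n-r-1)$-plane $\Sigma$ meeting $\Lambda_1$ in dimension $r-m-1$; this cone contains $\Lambda_1$ and cuts out $Z$ on $\Lambda_2$. Your argument replaces this with a Mayer--Vietoris sequence and an explicit coordinate splitting showing $H^0(\calI_{\Lambda_1}(d))+H^0(\calI_{\Lambda_2}(d))=H^0(\calI_{\Lambda_1\cap\Lambda_2}(d))$ inside $H^0(\calO(d))$. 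The two surjectivity statements are equivalent (restricting to $\Lambda_2$ passes from one to the other), but the proofs differ: yours is shorter and purely algebraic, while the paper's cone argument is coordinate-free and foreshadows the hyperplane-product constructions used later in Lemmas~\ref{dimension-count-m} and~\ref{3-quadrics-dim-estimate}.
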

\begin{proof}
	The restriction map $H^0(\PP^n, \calO(d)) \to H^0(\Lambda_1, \calO(d))$ is surjective with kernel \\ $H^0(\PP^n, \calI_{\Lambda_1}(d))$, therefore $\dim H^0(\PP^n, \calI_{\Lambda_1}(d)) = \dim H^0(\PP^n, \calO(d)) - \dim H^0(\Lambda_1, \calO(d)) = \binom{n+d}{n}-\binom{r+d}{r}$. Consider the restriction map $\phi\colon H^0(\PP^n, \calI_{\Lambda_1}(d)) \to H^0(\Lambda_2, \calI_{\Lambda_1\cap \Lambda_2}(d))$. We want to show $\phi$ is surjective. Choose an $(n-r-1)$-dimensional subspace $\Sigma \subset \PP^n$ such that $\Sigma \cap \Lambda_2 = \emptyset$ and $\dim \Sigma \cap \Lambda_1 = r-m-1$.  Given an element  $F \in  H^0(\Lambda_2, \calI_{\Lambda_1 \cap \Lambda_2}(d))$, consider the hypersurface $Z \subset \Lambda_2$ given by the vanishing locus of the section $F$. Let $C \subset \PP^n$ be the cone with apex $\Sigma$ and basis $Z$ (i.e the union of all lines through $Z$ and $\Sigma$); so $\deg C=\deg F = d$. The cone $C$ contains $\Lambda_1$ and $C \cap \Lambda_2=Z$. Therefore we can choose an equation $\tilde{F}$ of $C$ that gives the desired lift of $F$ to $H^0(\PP^n, \calI_{\Lambda_1}(d)) $.
	
	Since $\phi$ is surjective, \begin{align*}\dim H^0(\PP^n, \calI _{\Lambda_1 \cup \Lambda_2} (d) ) = \dim \ker \phi &=   \dim H^0(\PP^n, \calI_{\Lambda_1}(d)) - \dim H^0(\Lambda_2, \calI_{\Lambda_1 \cap \Lambda_2}(d))\\ &=    \binom{d+n}{n} - 2 \binom{d+r}{r} + \binom{d+m}{m}. \end{align*}
\end{proof}
\begin{lemma}\label{NoCrossing}
	Suppose $\delta = 0$. Suppose $([d], n) \neq ((3), 3)$ and $[d] \neq (2,2)$. For a general complete intersection $X_{[d]}$ and arbitrary $\Lambda_1, \Lambda_2 \in F_r(X_{[d]}),$ with $\Lambda_1 \neq \Lambda_2$, the intersection $\Lambda_1 \cap \Lambda_2$ is empty.
\end{lemma}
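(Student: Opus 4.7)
The plan is a dimension count on the locus of complete intersections admitting two distinct $r$-planes meeting in a prescribed dimension. For each $m \in \{0, 1, \ldots, r-1\}$, introduce the incidence variety
\[
J_m := \{(X, \Lambda_1, \Lambda_2) \in M_{[d]} \times \G(r,n)^2 : \Lambda_1, \Lambda_2 \subset X,\ \dim(\Lambda_1 \cap \Lambda_2) = m\}.
\]
If $\dim J_m < \dim M_{[d]}$ for every such $m$, then none of the projections $J_m \to M_{[d]}$ is dominant, so a general $X_{[d]}$ avoids the (finite) union of their images, and any two distinct $\Lambda_1, \Lambda_2 \in F_r(X_{[d]})$ must satisfy $\Lambda_1 \cap \Lambda_2 = \emptyset$. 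Projecting $J_m$ to $\G(r,n)^2$, the base (pairs of $r$-planes meeting in an $m$-plane) has dimension $(r+1)(n-r) + (m+1)(r-m) + (r-m)(n-r)$, obtained by successively parametrizing $\Lambda_1$, the subspace $M := \Lambda_1 \cap \Lambda_2 \subset \Lambda_1$, and an $r$-plane through $M$ meeting $\Lambda_1$ only in $M$. The fiber is a product of projective spaces of hypersurfaces containing $\Lambda_1 \cup \Lambda_2$, of total dimension $\sum_i [\binom{d_i+n}{n} - 2\binom{d_i+r}{r} + \binom{d_i+m}{m} - 1]$ by Lemma \ref{dimension-count-pair}.

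Subtracting $\dim M_{[d]} = \sum_i[\binom{d_i+n}{n} - 1]$ and using the $\delta = 0$ relation $(n-r)(r+1) = \sum_i \binom{d_i+r}{r}$ to cancel, a short computation yields
\[
\dim J_m - \dim M_{[d]} = \sum_{i=1}^s \binom{d_i+m}{m} - (m+1)(n+m-2r).
\]
So the lemma reduces to proving the combinatorial inequality
\[
\sum_{i=1}^s \binom{d_i+m}{m} < (m+1)(n+m-2r) \qquad \text{for } m = 0, 1, \ldots, r-1.
\]
Substituting $n - r = \frac{1}{r+1}\sum_i \binom{d_i+r}{r}$ on the right, then applying the identities $\binom{d+k}{k}/(k+1) = \binom{d+k}{k+1}/d$ and $\binom{d+r}{r+1} - \binom{d+m}{m+1} = \sum_{k=m}^{r-1} \binom{d+k}{k+2}$, this rearranges to
\[
\sum_{k=m}^{r-1} c_k > r - m, \qquad c_k := \sum_{i=1}^s \frac{\binom{d_i+k}{k+2}}{d_i}.
\]

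To verify the inequality, observe that $c_k$ is nondecreasing in $k$ (by Pascal's identity applied to each summand) and $c_0 = \sum_i (d_i - 1)/2$. In the generic regime $c_0 > 1$, whence every $c_k > 1$ and the partial sums exceed $r-m$ trivially. The only tuples $(s, [d])$ violating $c_0 > 1$ are $(1, (2))$, $(1, (3))$, and $(2, (2,2))$: the first admits no valid Fano problem (when $\delta = 0$, the condition $\delta_- \geq 0$ fails); the third $[d]=(2,2)$ is excluded by hypothesis; and $(1, (3))$ is either the cubic surface $((3), 3, 1)$ (also excluded) or a single cubic with $r \geq 3$ (the smallest valid case is $(n, r) = (8, 3)$), where $c_0 = 1$ exactly. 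This borderline case is the main obstacle, and I would handle it separately via the direct computation $c_1 = \binom{4}{3}/3 = 4/3 > 1$: combined with the nondecreasingness of $c_k$ and with $r \geq 3$, this still gives $\sum_{k=m}^{r-1} c_k > r - m$ for every $m$. Thus Lemma \ref{DM-fanfiction} (and the non-existence of valid Fano problems with $s=1$, $d=2$) is exactly what ensures that the inequality $c_0 > 1$ fails only on the cases excluded by hypothesis, making the argument sharp.
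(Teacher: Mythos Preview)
Your proof is correct and follows the same overall strategy as the paper: set up the incidence scheme $J_m$ (the paper's $I_m$), compute its dimension via the projection to $\G(r,n)^2$ and Lemma~\ref{dimension-count-pair}, and arrive at the identical function $f(m)=\sum_i\binom{d_i+m}{m}-(m+1)(n+m-2r)$ to be shown negative on $0\leqslant m\leqslant r-1$.

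The only real difference is in how the inequality $f(m)<0$ is established. The paper argues analytically: it observes $f(-1)=f(r)=0$, computes $f(0)=2r-n+s<0$ directly from Lemma~\ref{DM-fanfiction}, and shows $f''(x)\geqslant 0$ for $x\geqslant 0$ by an explicit second-derivative calculation, so convexity forces $f<0$ on $[0,r-1]$. You instead rewrite the inequality via the telescoping identity $\binom{d+r}{r+1}-\binom{d+m}{m+1}=\sum_{k=m}^{r-1}\binom{d+k}{k+2}$ into the form $\sum_{k=m}^{r-1}c_k>r-m$ and bound it term-by-term using monotonicity of $c_k$ and the value $c_0=\sum_i(d_i-1)/2$. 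Both routes isolate exactly the same exceptional tuples $(2)$, $(3)$, $(2,2)$; the paper's convexity argument is perhaps cleaner conceptually, while your discrete rewrite makes the role of the excluded cases more transparent (and avoids the slightly delicate check that $f''(0)\geqslant 0$).
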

\begin{proof}
	Let $I_m \subset M_{[d]} \times \G(r,n) \times \G(r, n)$ denote the scheme of triples $(X_{[d]}, \Lambda_1, \Lambda_2)$ such that $\Lambda_1, \Lambda_2 \subset X_{[d]}$ and $\dim \Lambda_1 \cap \Lambda_2 = m$. We need to show that the projection $I_m \to M_{[d]}$ is not dominant for any $0 \leqslant m \leqslant r-1$; we will do so by comparing the dimensions of $I_m$ and $M_{[d]}$.
	
	By definition
	$\dim M_{[d]} =  \binom{[d]+n}{ n} -s.$
	To compute the dimension of $I_m$, consider the projection $\pi_{23}\colon I_m \to \G(r,n) \times \G(r,n)$. The image $\pi_{23}(I_m)$ consists of pairs of subspaces $\Lambda_1, \Lambda_2$ with $\dim \Lambda_1 \cap \Lambda_2 =m$, so \[\dim \pi_{23}(I_m) = \dim \G(r,n) + \dim \G(m,r) + \dim \G(r-m-1, n-m-1).\]
	The fiber of $I_m \to \pi_{23}(I_m)$ above a point $(\Lambda_1, \Lambda_2)$ is a product of projective spaces \[\prod_{i=1}^s \left| H^0\!\left(\PP^n, \calI_{\Lambda_1 \cup \Lambda_2}(d_i)\right) \right|,\] whose dimension is given in Lemma \ref{dimension-count-pair}. Combining these two formulas gives
	\[\dim I_m = (2r-m+1)(n-r) + (m+1)(r-m) +  \left(\binom{[d]+n}{n} - 2 \binom{[d]+r}{r} + \binom{[d]+m}{m} -s \right).\]
	Define $f(m) \colonequals \dim I_m - \dim M_{[d]}$; using formula (\ref{main-relation}) we get \[f(m)=(2r-n-m)(m+1) + \sum_{i=1}^s \binom{d_i+m}{d_i}.\]
	Considered as a polynomial in $m$, $f$ has roots at $m=-1$ and $m=r$ by formula (\ref{main-relation}). Our goal is to show that $f(m)$ is negative for $0 \leqslant m \leqslant r-1$; we do so by showing that $f(0)<0$ and $f''(x) \geqslant 0$ for any $x\geqslant 0$.  At zero we get $f(0)=2r-n+s$, which is negative by Case \ref{Debarre-Manivel1} of Theorem \ref{Debarre-Manivel} and Lemma \ref{DM-fanfiction}. Note that the binomial coefficients $\binom{d+x}{d}$ are completely monotone as a function of $x$, i.e. all of the derivatives of $\binom{d+x}{d}$ are positive for $x \geqslant 0$. In particular $f'''(x) \geqslant 0$, for any $x \geqslant 0$. Therefore to show that $f''(x) \geqslant 0$ for all $x \geqslant 0$ it suffices to show that $f''(0)\geqslant 0$. By taking log derivatives, we see
	 \[\binom{d+x}{d}' = \binom{d+x}{d} \left(\frac{1}{1+x}+ \frac{1}{2+x} +\dots+ \frac{1}{d+x} \right).\] From this we compute
	 \[f''(0) = -2 + \sum_{i=1}^s\left( \left(1+\frac{1}{2} + ... \frac{1}{d_i}\right)^2 - \left(1 + \frac{1}{4} + ... + \frac{1}{d_i^2}\right)\right).\] This expression is easily seen to be negative if and only if $[d]=(2)$, in which case the dimension of the Fano scheme $\delta$ is nonzero.	
\end{proof}
\begin{theorem}\label{2-trans}
	Suppose $([d], n) \neq ((3), 3)$ and $[d] \neq (2,2)$. Then the Fano monodromy group $G_{[d]}$ is doubly transitive.
\end{theorem}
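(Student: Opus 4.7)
The plan is to extend the approach of Proposition \ref{transitive} from the incidence scheme $I$ to ordered pairs of distinct $r$-planes. Since $G_{[d]}$ is already transitive, $2$-transitivity is equivalent to the statement that the subscheme
\[ J \colonequals \bigl\{(X_{[d]}, \Lambda_1, \Lambda_2) \in M_{[d]} \times \G(r,n)^2 : \Lambda_1, \Lambda_2 \subset X_{[d]},\ \Lambda_1 \neq \Lambda_2 \bigr\} \]
has a unique irreducible component dominating $M_{[d]}$; concretely, that the preimage of any sufficiently small open $U' \subset U$ is irreducible, hence connected as an \'etale cover.

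First I would stratify $J$ by the dimension $m$ of $\Lambda_1 \cap \Lambda_2$, using the schemes $I_m$ from Lemma \ref{NoCrossing} for $0 \leqslant m \leqslant r-1$ together with the open stratum $I_{-1}$ parameterizing triples with $\Lambda_1 \cap \Lambda_2 = \emptyset$. Under the hypotheses $([d],n) \neq ((3),3)$ and $[d] \neq (2,2)$, Lemma \ref{NoCrossing} asserts that none of the $I_m$ with $m \geqslant 0$ dominates $M_{[d]}$. It therefore suffices to prove that $I_{-1}$ is irreducible.

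The second step mimics the proof of Proposition \ref{transitive}. Consider the projection $\pi_{23}\colon I_{-1} \to \G(r,n) \times \G(r,n)$; its image is the open subscheme $V$ of disjoint pairs of $r$-planes. By Lemma \ref{DM-fanfiction} the excluded cases are exactly those with $n - 2r - s = 0$, so in all remaining cases $n \geqslant 2r + s + 1 \geqslant 2r + 2$ and $V$ is nonempty. The group $\PGL_{n+1}(K)$ acts transitively on $V$, so $V$ is irreducible. The fiber of $\pi_{23}$ over a point $(\Lambda_1, \Lambda_2) \in V$ is the product of projective spaces $\prod_{i=1}^s |H^0(\PP^n, \calI_{\Lambda_1 \cup \Lambda_2}(d_i))|$, whose dimensions are constant: the proof of Lemma \ref{dimension-count-pair} goes through verbatim for the empty-intersection case by choosing an $(n-r-1)$-plane $\Sigma$ with $\Sigma \supset \Lambda_1$ and $\Sigma \cap \Lambda_2 = \emptyset$, which is possible since $n \geqslant 2r+1$. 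Hence the fibers of $\pi_{23}$ are irreducible and equidimensional, and $I_{-1}$ is irreducible.

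The main obstacle in this argument is the dimension count that rules out the strata $I_m$ for $m \geqslant 0$; this is already packaged into Lemma \ref{NoCrossing}, and the entire role of the numerical hypothesis $([d],n) \neq ((3),3)$, $[d] \neq (2,2)$ in Theorem \ref{2-trans} is to supply that lemma. Geometrically, the excluded cases are precisely those in which the generic fiber contains intersecting pairs of $r$-planes; these produce additional closed monodromy orbits and force $2$-transitivity to fail, in accordance with the classical intersection combinatorics of the $27$ lines on a cubic surface and of the $16$ lines on a degree-$4$ del Pezzo.
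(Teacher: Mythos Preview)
Your proof is correct and uses essentially the same ingredients as the paper's: Lemma~\ref{NoCrossing} to discard the intersecting strata $I_m$ with $m\geqslant 0$, and $\PGL_{n+1}$-homogeneity of disjoint pairs to prove that the open stratum $I_{-1}$ is irreducible with product-of-projective-spaces fibers. The only difference is packaging: the paper fixes one $r$-plane $\Lambda$, restricts the cover to $M_\Lambda$, and shows the complement of the trivial section is irreducible via the action of $\mathrm{Stab}(\Lambda)$ on planes disjoint from $\Lambda$, whereas you work directly with the pair incidence scheme $J$ over $M_{[d]}$; these are equivalent formulations of the same argument, and the paper's version is chosen to match the inductive framework of Lemma~\ref{bootstrap}.
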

\begin{remark}
Theorem \ref{2-trans} detects all the special cases of Theorem \ref{intro-main}.
\end{remark}
We recall the general setup of the incidence variety and its two projection maps.

\centerline{
\xymatrix{\ar @{} [dr] 
I\ar[r]^-{ \mathlarger{ \pi}_2}\ar[d]_-{\mathlarger{\pi}} &\G(r,n)  \\\ M_{[d]}&  }
}
\begin{proof}
	Let $X_{[d]}$ be a sufficiently general complete intersection such that the covering $\pi$ is \'etale over $X_{[d]}$ and no two distinct $r$-planes in $F_r(X_{[d]})$ intersect; such $X_{[d]}$ exist by Lemma \ref{NoCrossing}. Let $\Lambda \in F_r(X_{[d]})$ be an $r$-plane. Consider the subscheme $M_\Lambda \subset M_{[d]}$ consisting of intersections of hypersurfaces that contain $\Lambda$. The scheme $M_\Lambda$ is a product of projective spaces, and in particular is irreducible. Since $X_{[d]} \in M_\Lambda$, the covering $\pi$ is generically \'{e}tale when restricted to $M_\Lambda$. Denote by $I_\Lambda$ the scheme $\pi^{-1}(M_\Lambda)$. The monodromy group $G_\Lambda$ of $I_\Lambda \to M_\Lambda$ is naturally a permutation subgroup of $G_{[d]}$. 
	
	The scheme $\pi_2^{-1}(\Lambda) \subset M_{[d]} \times \G(r,n)$ is a closed subscheme of $I_\Lambda$ and dominates $M_\Lambda$. Therefore, $\pi_2^{-1}(\Lambda)$ is an irreducible component of $I_\Lambda$; let $I_\Lambda^{\mathrm{irr}} \subset I_\Lambda$ denote the closure of the complement $I_\Lambda \setminus \pi_2^{-1}(\Lambda)$. Since the covering $I_\Lambda \to M_\Lambda$ decomposes into irreducible components,  the monodromy group $G_\Lambda$ is contained in a one-point-stabilizer of $G_{[d]}$. Since $G_{[d]}$ is transitive, to show that $G_{[d]}$ is $2$-transitive it suffices to show that $G_\Lambda$ acts transitively on a smooth fiber of $I_\Lambda^{\mathrm{irr}}  \to M_\Lambda$. 
	
	Let $U \subset \G(r,n)$ be the open subscheme $U \colonequals \{\Sigma \in \G(r,n)| \Sigma \cap \Lambda = \emptyset\}.$ Lemma \ref{NoCrossing} implies that for a general $Y_{[d]} \in M_\Lambda$ the fiber $\pi^{-1}(Y_{[d]})$ is contained in $\pi_2^{-1}(U)$. Therefore the projections $\pi_2^{-1}(U) \cap I_\Lambda^{\mathrm{irr}}  \to M_\Lambda$ and $I_\Lambda^{\mathrm{irr}}  \to M_\Lambda$ are isomorphic over the generic point; thus it suffices to show that the scheme $\widetilde{I}_\Lambda^{\mathrm{irr}}\colonequals \pi_2^{-1}(U) \cap I_\Lambda^{\mathrm{irr}} $ is irreducible.
	
	The projection $ \pi_2\colon \widetilde{I}_\Lambda^{\mathrm{irr}} \to U$ is proper. The fiber $\pi_2^{-1}(\Sigma) \cap \widetilde{I}_\Lambda^{\mathrm{irr}} $ over a point $\Sigma \in U$ consists of intersections of hypersurfaces of type $[d]$ that contain $\Lambda$ and $\Sigma$. The group $\mathrm{Stab}(\Lambda) \subset \PGL_{n+1}(K)$ acts transitively on the $r$-planes that do not intersect $\Lambda$, therefore the dimension $\dim \pi_2^{-1}(\Sigma) \cap \widetilde{I}_\Lambda^{\mathrm{irr}}$ does not depend on $\Sigma$. Since $\pi_2\colon \widetilde{I}_\Lambda^{\mathrm{irr}} \to U$ is proper with irreducible equidimensional fibers, the source $\widetilde{I}_\Lambda^{\mathrm{irr}}$ is irreducible.	
\end{proof}
\begin{remark}
	To prove $4$-transitivity we show linear independence of quadruples of linear subspaces on a sufficiently general complete intersection, similarly to Lemma \ref{NoCrossing}. At first glance, it might seem like an imposing task to go through all possible configurations of quadruples of linear subspaces. When $[d]\neq (2,2,2)$ we use an inductive approach. In the proof of Lemma \ref{bootstrap}, we deduce the linear independence of quadruples of subspaces from the linear independence of triples of subspaces. When $[d]=(2,2,2)$ the triples of subspaces are not linearly independent; this case is considered in Lemma \ref{no-malarkey}.
\end{remark}
 \section{Main Theorem}
 \label{sec:main}

We can now prove Theorem \ref{intro-main} in all cases except for $[d]=(2,2,2)$ with $n>8
$; this is stated in Theorem \ref{mainslick}. Theorem \ref{CFSG} combined with Corollary \ref{NoMathieu} reduces the problem to analyzing the transitivity degree of $G_{[d]}$, which is done inductively in Lemma \ref{bootstrap}.

\begin{lemma}\label{dimension-count-m}
	Let $m > 1$ be an integer. Suppose $\Lambda_1, ..., \Lambda_{m+1} \subset \PP^n$ are linear subspaces of dimension $r$ such that any $m$ of $\Lambda_i$'s are linearly independent (i.e. have maximal linear span). Then for any $d\geqslant 2$
	\[\dim H^0\left(\PP^n, \calI_{\Lambda_1 \cup ... \cup \Lambda_{m+1}}(d)\right)= \binom{d+n}{n} - (m+1) \binom{d+r}{r}.\]
\end{lemma}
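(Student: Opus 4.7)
The plan is to induct on $m \geq 2$, applying the short exact sequence
\[0 \to \calI_{\Lambda_1 \cup \cdots \cup \Lambda_{m+1}}(d) \to \calI_{\Lambda_1 \cup \cdots \cup \Lambda_m}(d) \to \calO_{\Lambda_{m+1}}(d) \to 0,\]
which is valid because the hypothesis (with $m \geq 2$) forces $\Lambda_{m+1}$ to be disjoint from $\Lambda_i$ for each $i \leq m$. Granting surjectivity of the restriction $H^0(\calI_{\Lambda_1 \cup \cdots \cup \Lambda_m}(d)) \to H^0(\Lambda_{m+1}, \calO(d))$ on global sections, the associated long exact sequence drops the dimension by $\binom{d+r}{r}$. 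The base case $m=2$ follows from Lemma \ref{dimension-count-pair} applied to the disjoint pair $(\Lambda_1, \Lambda_2)$; the inductive step applies the lemma at index $m-1$ to $\{\Lambda_1, \dots, \Lambda_m\}$, which automatically satisfies ``any $m-1$ linearly independent'' (any $(m-1)$-subset extends to an $m$-subset of $\{\Lambda_1, \dots, \Lambda_{m+1}\}$, which is linearly independent by hypothesis).

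The crux is the surjectivity. Any $F \in H^0(\Lambda_{m+1}, \calO(d))$ is a sum of monomials in linear forms on $\Lambda_{m+1}$, so by linearity it suffices to lift a single monomial $F = \ell^{(1)} \cdots \ell^{(d)}$. For each factor index $j$, I choose $i_j^\ast \in \{1, \dots, m\}$ and lift $\ell^{(j)}$ to a linear form $\tilde\ell^{(j)}$ on $\PP^n$ restricting to $\ell^{(j)}$ on $\Lambda_{m+1}$ and vanishing on $\Lambda_i$ for every $i \leq m$ with $i \neq i_j^\ast$. Such a lift exists because the $m$ subspaces $\{\Lambda_{m+1}\} \cup \{\Lambda_i : i \leq m,\ i \neq i_j^\ast\}$ are linearly independent by hypothesis, making the joint restriction map on $H^0(\PP^n, \calO(1))$ surjective, so the tuple $(\ell^{(j)}, 0, \dots, 0)$ lies in its image. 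Arranging that $i_1^\ast, \dots, i_d^\ast$ are not all equal (possible since $d, m \geq 2$, e.g., take $i_1^\ast = 1$ and $i_2^\ast = 2$), the product $\tilde F := \prod_{j=1}^d \tilde\ell^{(j)}$ is a degree-$d$ form on $\PP^n$ restricting to $F$ on $\Lambda_{m+1}$ and vanishing on every $\Lambda_i$ with $i \leq m$: for such $i$, some $j$ has $i_j^\ast \neq i$, so $\tilde\ell^{(j)}$ vanishes on $\Lambda_i$ and hence so does $\tilde F$.

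The main obstacle is this surjectivity. The hypothesis $d \geq 2$ is essential for choosing two distinct ``missing'' indices among the $d$ factors and so covering every $\Lambda_i$; for $d = 1$ the analogous formula fails, as three collinear points in $\PP^2$ already illustrate.
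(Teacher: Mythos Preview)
Your proof is correct and follows essentially the same approach as the paper: both argue by induction on $m$ (with the base case supplied by Lemma~\ref{dimension-count-pair}), reduce to surjectivity of the restriction map $H^0(\PP^n,\calI_{\Lambda_1\cup\cdots\cup\Lambda_m}(d))\to H^0(\Lambda_{m+1},\calO(d))$, and prove surjectivity by lifting products of linear forms using the linear-independence hypothesis. The only cosmetic difference is how the $\Lambda_i$ are distributed among the linear factors: the paper takes one hyperplane through $\Lambda_1,\dots,\Lambda_{m-1}$ and one through $\Lambda_m$, whereas you take two hyperplanes each missing a single (but different) $\Lambda_i$.
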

\begin{proof}
	By induction on $m$ for $m>2$, and by Lemma \ref{dimension-count-pair} for $m = 2$, we can assume that \[\dim H^0\left(\PP^n, \calI_{\Lambda_1 \cup ... \cup \Lambda_m}(d)\right)= \binom{d+n}{n} - m \binom{d+r}{r}.\] 

	 Consider the restriction map $\res_{\Lambda_{m+1}}\colon H^0 \left(\PP^n, \calI_{\Lambda_1 \cup ... \cup \Lambda_m}(d)\right) \to H^0 \left(\Lambda_{m+1} , \calO(d)\right)$. By definition $\ker \res_{\Lambda_{m+1}} = H^0\left(\PP^n, \calI_{\Lambda_1 \cup ... \cup \Lambda_{m+1}}(d)\right),$ so the statement is equivalent to the surjectivity of $\res_{\Lambda_{m+1}}$ by rank-nullity. Products of linear polynomials form a generating set for $H^0 \left(\Lambda_{m+1} , \calO(d)\right)$, so it suffices to show that for any collection of $d$ hyperplanes $\Sigma_1, ..., \Sigma_d \subset \Lambda_{m+1}$ there exists a degree $d$ hypersurface $\Sigma \subset \PP^n$ such that $\Sigma\supset \Lambda_1 \cup ... \cup \Lambda_m$ and $\Sigma \cap \Lambda_{m+1}=\bigcup \Sigma_i$. Since $\Lambda_1, ..., \Lambda_{m-1}, \Lambda_{m+1}$ are linearly independent, there exists a hyperplane $H_1 \subset \PP^n$ such that $\Lambda_1 \cup ... \cup \Lambda_{m-1} \subset H_1$ and $H_1 \cap \Lambda_{m+1}=\Sigma_{1}$. Similarly let $H_2$ be a hyperplane through $\Lambda_m$ such that $H_2 \cap \Lambda_{m+1}=\Sigma_2$. Finally, choose hyperplanes $H_i \subset \PP^n$, $i=3, ..., d$ such that $H_i \cap \Lambda_{m+1}=\Sigma_i$. The hypersurface $\Sigma \colonequals H_1 \cup ... \cup H_d$ is the desired lift of $\bigcup_i \Sigma_i$ to $H^0 \left(\PP^n, \calI_{\Lambda_1 \cup ... \cup \Lambda_m}(d)\right).$
\end{proof}

\begin{lemma}\label{bootstrap}
	Let $m>1$ be an integer such that $n+1\geqslant m(r+1)$ and $\binom{d_i+n}{n} > (m+1) \binom{d_i+r}{r}$ for all $d_i$. Suppose that $G_{[d]}$ is $m$-transitive and that any $m$ distinct $r$-planes on a general complete intersection of type $[d]$ are linearly independent. Then $G_{[d]}$ is $(m+1)$-transitive. If in addition $n+1 \geqslant (m+1)(r+1)$, then any $m+1$ distinct $r$-planes on a general complete intersection are linearly independent.
\end{lemma}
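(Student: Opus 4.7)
The plan is to adapt the strategies of Theorem \ref{2-trans} (for the transitivity) and Lemma \ref{NoCrossing} (for the linear independence), using Lemma \ref{dimension-count-m} as the key input in place of Lemma \ref{dimension-count-pair}.

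For the $(m+1)$-transitivity: fix an $m$-tuple $\vec{\Lambda} = (\Lambda_1, \ldots, \Lambda_m)$ of linearly independent $r$-planes (possible because $n+1 \geqslant m(r+1)$), and let $M_{\vec{\Lambda}} \subset M_{[d]}$ be the closed subscheme of complete intersections containing every $\Lambda_i$. By Lemma \ref{dimension-count-m} and the hypothesis $\binom{d_i+n}{n} > (m+1)\binom{d_i+r}{r}$, $M_{\vec{\Lambda}}$ is an irreducible product of projective spaces of strictly positive dimension. The cover $\pi^{-1}(M_{\vec{\Lambda}}) \to M_{\vec{\Lambda}}$ contains $m$ distinguished sections coming from $\Lambda_1, \ldots, \Lambda_m$; let $I_{\vec{\Lambda}}^{\mathrm{irr}}$ denote the closure of the complement. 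Since $G_{[d]}$ is $m$-transitive, $(m+1)$-transitivity is equivalent to the irreducibility of $I_{\vec{\Lambda}}^{\mathrm{irr}}$. I would establish this via the projection $\pi_2 \colon I_{\vec{\Lambda}}^{\mathrm{irr}} \to \G(r,n)$, restricted to the $\mathrm{Stab}(\vec{\Lambda}) \subset \PGL_{n+1}$-orbit $U$ of $r$-planes in general position relative to $\vec{\Lambda}$. Over $U$ the fibers are products of projective spaces of common dimension, given by Lemma \ref{dimension-count-m} applied to $\{\Lambda_1, \ldots, \Lambda_m, \Sigma\}$, so properness plus irreducible equidimensional fibers over the irreducible $U$ yields irreducibility of $\pi_2^{-1}(U) \cap I_{\vec{\Lambda}}^{\mathrm{irr}}$, which is dense in $I_{\vec{\Lambda}}^{\mathrm{irr}}$.

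For the linear-independence conclusion, assuming in addition $n+1 \geqslant (m+1)(r+1)$, I would mimic Lemma \ref{NoCrossing}: for each configuration type $\tau$ of a non-linearly-independent $(m+1)$-tuple, form the incidence scheme $I_\tau \subset M_{[d]} \times \G(r,n)^{m+1}$, compute $\dim I_\tau$ by projecting to $\G(r,n)^{m+1}$ (using the inductive linear independence of every $m$-subfamily together with Lemma \ref{dimension-count-m}-style formulas for the fibers), and show $\dim I_\tau < \dim M_{[d]}$ so that $I_\tau \to M_{[d]}$ cannot be dominant. The inductive hypothesis lets one parameterize $\tau$ by the single number $m' = \dim(\Lambda_{m+1} \cap \mathrm{Span}(\Lambda_1, \ldots, \Lambda_m))$, since once every $m$-subfamily is linearly independent, failure of full linear independence is symmetric and is characterized by this one intersection dimension.

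The hardest step will be the dimension comparison for the linear-independence conclusion: unlike the $m=2$ case of Lemma \ref{NoCrossing}, the polynomial analog of the function $f(m)$ there is no longer quadratic, and one must use monotonicity and convexity of $\binom{d+x}{x}$ together with the bound $\binom{d_i+n}{n} > (m+1)\binom{d_i+r}{r}$ to control all strata $\tau$ uniformly. A secondary subtlety in the transitivity part is that the open orbit $U$ described above is nonempty only when $n+1 \geqslant (m+1)(r+1)$; in the intermediate regime $m(r+1) \leqslant n+1 < (m+1)(r+1)$ one must instead take $U$ to parameterize the generic incidence pattern of $\Sigma$ with $\vec{\Lambda}$ rather than full linear independence, and then verify that the generic fiber of $\pi$ over $M_{\vec{\Lambda}}$ still lands in $U$ — the residual $(2,2,2)$ case where this argument breaks down entirely is precisely what is deferred to Lemma \ref{no-malarkey}.
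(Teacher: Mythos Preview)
Your transitivity argument has a gap in the choice of $U$. You take $U$ to be the locus of $\Sigma$ in general position relative to $\vec\Lambda$, meaning (when $n+1\geqslant (m+1)(r+1)$) that the full tuple $\{\Sigma,\Lambda_1,\ldots,\Lambda_m\}$ is linearly independent. But then the claim ``$\pi_2^{-1}(U)\cap I_{\vec\Lambda}^{\mathrm{irr}}$ is dense in $I_{\vec\Lambda}^{\mathrm{irr}}$'' requires that every $\Sigma\ne\Lambda_i$ on a general $X\in M_{\vec\Lambda}$ already lies in $U$, i.e., that every $(m+1)$-tuple on $X$ is linearly independent --- precisely the conclusion you have not yet proved. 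In the intermediate regime you fall back on an unspecified ``generic incidence pattern'' with no mechanism for showing the generic fiber lands there. The fix is to notice that Lemma~\ref{dimension-count-m} only requires \emph{any $m$} of the $m+1$ subspaces to be linearly independent, not all $m+1$. So the correct $U$ is the open (not single-orbit) set of $\Sigma$ such that every $m$-element subset of $\{\Sigma,\Lambda_1,\ldots,\Lambda_m\}$ is linearly independent: the fibers of $\pi_2$ over this $U$ are still equidimensional by Lemma~\ref{dimension-count-m}, and the inductive hypothesis ``any $m$ distinct $r$-planes on a general complete intersection are linearly independent'' is exactly what guarantees the generic fiber of $\pi$ over $M_{\vec\Lambda}$ sits inside $\pi_2^{-1}(U)$. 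With this $U$ the argument works uniformly and there is no intermediate regime.

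Your plan for the linear-independence conclusion via a stratified dimension count in the style of Lemma~\ref{NoCrossing} is workable in principle but unnecessary, and the ``hardest step'' you flag is genuinely unpleasant. The paper instead uses the transitivity just proved: once $G_{[d]}$ is $(m+1)$-transitive, the $(m+1)$-fold fiber product of $I$ over $M_{[d]}$ (off the diagonals) is irreducible, so the closed locus of linearly dependent $(m+1)$-tuples is either everything or fails to dominate $M_{[d]}$. Hence it suffices to produce \emph{one} linearly independent $(m+1)$-tuple on \emph{one} general $X$, and that falls out of the irreducibility of $\widetilde I_{\vec\Lambda}^{\mathrm{irr}}$ already established: when $n+1\geqslant (m+1)(r+1)$ the fully-independent locus is a nonempty open in the correct $U$, so its $\pi_2$-preimage is dense and dominates $M_{\vec\Lambda}$, and a $\PGL_{n+1}$-sweep carries this to an open in $M_{[d]}$.
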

\begin{proof}
	Let $\Lambda_1, ..., \Lambda_m \subset \PP^n$ be linearly independent $r$-planes; they exist since $n+1\geqslant m(r+1)$. Let $X_{[d]}$ be a sufficiently general complete intersection such that $X_{[d]} \supset \Lambda_i$ for all $i$ and $\pi\colon I \to M_{[d]}$ is \'etale above $X_{[d]}$. 
	
	Let $M_\Lambda \subset M_{[d]}$ denote the subscheme of intersections of hypersurfaces that contain $\Lambda_1 \cup ... \cup \Lambda_m$. Since $X_{[d]} \in M_\Lambda$, the covering $\pi$ is generically \'{e}tale when restricted to $M_\Lambda$. 
	
	Note that for any $i$, the subscheme $\pi^{-1}(M_\Lambda) \cap \pi_2^{-1}(\Lambda_i) \subset I$ maps isomorphically to $M_\Lambda$. Let $I_\Lambda^{\mathrm{irr}} \subset I$ denote the union of irreducible components of $\pi^{-1}(M_\Lambda)$ that are not of the form $\pi^{-1}(M_\Lambda) \cap \pi_2^{-1}(\Lambda_i)$. The covering $I_\Lambda^{\mathrm{irr}} \to M_\Lambda$ is generically \'{e}tale of degree $N([d])-m$. The monodromy of $I_\Lambda^{\mathrm{irr}} \to M_\Lambda$ is contained in the pointwise stabilizer of an $m$-tuple of points in $G_{[d]}$. Therefore to show that $G_{[d]}$ is $(m+1)$-transitive it suffices to show that the monodromy of $I_\Lambda^{\mathrm{irr}} \to M_\Lambda$ is transitive.

	Consider the open subscheme $U \subset \G(r,n)$ consisting of $r$-planes $\Lambda$ such that $\Lambda$ and an arbitrary set of $m-1$ distinct $\Lambda_i$ form a set of $m$ linearly independent spaces. Let $\widetilde{I}_\Lambda^{\mathrm{irr}}  \colonequals \pi_2^{-1}(U) \cap I_\Lambda^{\mathrm{irr}}$. By assumption, the preimage of $X_{[d]}$ under $\pi\colon I_\Lambda^{\mathrm{irr}} \to M_\Lambda$ is contained in $\widetilde{I}_\Lambda^{\mathrm{irr}}$, therefore the monodromy groups  of $\widetilde{I}_\Lambda^{\mathrm{irr}} \to M_\Lambda$ and of $I_\Lambda^{\mathrm{irr}} \to M_\Lambda$ are equal. Hence it is enough to show that the monodromy of $\widetilde{I}_\Lambda^{\mathrm{irr}} \to M_\Lambda $ is transitive; we will do so by showing that $\widetilde{I}_\Lambda^{\mathrm{irr}}$ is irreducible.
	
	Consider the projection $\pi_2\colon \widetilde{I}_\Lambda^{\mathrm{irr}} \to U$; it is surjective, since, for all $i$, we assume $\binom{d_i+n}{n}>(m+1)\binom{d_i+r}{r}$, and so for any $m+1$ distinct $r$-planes there exists a nontrivial intersection of hypersurfaces of type $[d]$ containing them. The fiber $\pi_2^{-1}(\Lambda) \cap \widetilde{I}_\Lambda^{\mathrm{irr}}$ has dimension $\sum_{i=1}^s \left(\binom{d_i+n}{n} - (m+1)\binom{d_i+r}{r}-1\right)$ by Lemma \ref{dimension-count-m}. Therefore $\pi_2\colon \widetilde{I}_\Lambda^{\mathrm{irr}} \to U$ is a proper dominant map with irreducible equidimensional fibers, so $\widetilde{I}_\Lambda^{\mathrm{irr}}$ is irreducible.
	
	Finally, we need to show that any $m+1$ distinct $r$-planes on a general complete intersection are linearly independent. Since $G_{[d]}$ is $(m+1)$-transitive, it suffices to show that a general complete intersection contains an $(m+1)$-tuple of linearly independent $r$-planes. Let $U_\Lambda \subset \G(r,n)$ denote the set of planes $\Lambda$, such that the subspaces $\Lambda, \Lambda_1, ..., \Lambda_m$ are linearly independent; $U_\Lambda$ is nonempty by the assumption $n+1\geqslant (m+1)(r+1)$. Since the projection $\widetilde{I}_\Lambda^{\mathrm{irr}} \to U$ is surjective and $\widetilde{I}_\Lambda^{\mathrm{irr}}$ is irreducible, the set $\pi_2^{-1}(U_\Lambda) \cap \widetilde{I}_\Lambda^{\mathrm{irr}}$ is a dense open subset of $\widetilde{I}_\Lambda^{\mathrm{irr}}$. Hence there is an open subset $V \subset M_{\Lambda}$ such that for every $X_{[d]}\in V$, the scheme $X_{[d]}$ contains $m+1$ linearly independent subspaces.
	
	Consider the morphism $\phi\colon \PGL_{n+1} \times M_\Lambda \to M_{[d]}$ induced by the linear action of $\PGL_{n+1}$ on $\PP^n$. Since a general complete intersection contains an $m$-tuple of linearly independent $r$-planes, the morphism $\phi$ is dominant. Thus the image $\phi(\PGL_{n+1} \times V)$ contains an open subset of $M_{[d]}$, so a general complete intersection contains $m+1$ linearly independent $r$-planes.
\end{proof}

Lemma \ref{Large-enough-for-4} checks when the conditions of Lemma \ref{bootstrap} are satisfied with $m = 3$.

\begin{lemma}\label{Large-enough-for-4}
Suppose $([d],n,r)$ is not one of the following exceptional Fano problems:
\begin{itemize}
\item $[d] = (2,2)$, $n = 2r+2$;
\item $[d] = (2,2,2)$, $n > 8$, $n = 5r+3$;
\item $[d] = (3)$, $n= 3, 8, 11, 18, 22$, $r =1,3, 4, 6, 7 $;
\item $[d]=(4), n = 7, r = 2$;
\item $[d]= (5), n = 4, r = 1$.
\end{itemize}
Then $n+1\geqslant 3(r+1)$ and $\binom{d_i+n}{n} > 4\binom{d_i+r}{r}$ for all $d_i$.
\end{lemma}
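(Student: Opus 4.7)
My plan is to observe that the second inequality of the lemma follows from the first (together with $d_i \geqslant 2$), and then enumerate all Fano problems with $\delta = 0$ that violate the first. For the implication I would use the identity
\[
	\frac{\binom{d+n}{n}}{\binom{d+r}{r}} \;=\; \prod_{j=r+1}^{n}\!\left(1+\frac{d}{j}\right),
\]
which is non-decreasing in $d \geqslant 0$, so for $d \geqslant 2$ the ratio is at least its value at $d = 2$, namely $(n+2)(n+1)/((r+2)(r+1))$. Under the hypothesis $n+1 \geqslant 3(r+1)$ this lower bound satisfies
\[
	\frac{(n+2)(n+1)}{(r+2)(r+1)} \;\geqslant\; \frac{3(3r+4)(r+1)}{(r+2)(r+1)} \;=\; \frac{9r+12}{r+2} \;>\; 4
\]
for every $r \geqslant 1$, so the first condition implies the second, and it remains to classify those $([d], n, r)$ that violate the first inequality.

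Using $\delta = 0$, the failure $n+1 < 3(r+1)$ rewrites as $\sum_{i=1}^{s}\binom{d_i+r}{r} < 2(r+1)^2$. Since each term is at least $(r+2)(r+1)/2$, summation gives $s(r+2) < 4(r+1)$, hence $s \leqslant 3$; and the case $s = 3$ forces $[d] = (2,2,2)$. When $[d] = (2,2,2)$ the equation $\delta = 0$ gives $n = (5r+6)/2$, so $r$ must be even, and the inequality $n < 3r+2$ becomes $r \geqslant 4$, i.e.\ $n > 8$. For $s = 2$ a case-by-case check through $(2,2), (2,3), (2,4), (3,3), \ldots$ shows that only $[d] = (2,2)$ yields integer $n$ satisfying the bound; it gives $n = 2r+2$ for every $r \geqslant 1$, while every other multidegree either violates the inequality or forces a non-integral $n$. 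For $s = 1$ the three conditions $\binom{d+r}{r} < 2(r+1)^2$, $(r+1) \mid \binom{d+r}{r}$ (so that $n \in \Z$), and $n \geqslant 2r+1$ (nonemptiness of the Fano scheme, by Theorem \ref{Debarre-Manivel}) restrict $(d, r)$ to finitely many candidates; direct tabulation then recovers exactly the listed exceptions for $[d] = (3)$, $(4)$, and $(5)$.

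The only real difficulty is the bookkeeping for small $s$; this is entirely finite and elementary, since $\binom{d+r}{r}$ grows rapidly in both $d$ and $r$, so the exceptional regime contains only a short list of candidates to check.
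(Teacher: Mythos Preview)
Your argument is correct and takes a genuinely different route from the paper's.

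The paper proceeds by first observing that $n>4r$ suffices for both inequalities (via a convexity bound $\binom{d_i+n}{n}>\binom{d_i+4r}{4r}>4\binom{d_i+r}{r}$), and then handles all Fano problems with $n\leqslant 4r$ by two computer searches: one listing all problems with $r\leqslant 2$, $n\leqslant 9$, and another (after bounding $r\leqslant 12$, $d_i\leqslant 12$ from~(\ref{main-relation})) producing Table~\ref{smallfanotable}, each entry of which is then checked by hand.

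Your reduction is sharper. By showing directly that $n+1\geqslant 3(r+1)$ together with $d_i\geqslant 2$ already forces the binomial inequality (via the monotonicity of $\prod_{j=r+1}^{n}(1+d/j)$ in $d$), you collapse the problem to classifying violations of the single linear inequality $n\geqslant 3r+2$. Feeding this into~(\ref{main-relation}) gives $\sum_i\binom{d_i+r}{r}<2(r+1)^2$, whence $s\leqslant 3$; the case $s=3$ forces $[d]=(2,2,2)$, the case $s=2$ is disposed of by the handful of checks you indicate (only $(2,2)$ survives, since for $(2,3)$ the constraint $r<4$ leaves no integral $n$, and every other multidegree already violates the bound at $r=1$), and $s=1$ yields $d\leqslant 6$ at $r=1$ with the range in $r$ shrinking rapidly as $d$ grows. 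The one place you silently use $\delta_-\geqslant 0$ is to exclude $[d]=(2)$, where $n=(3r+2)/2<2r+1$; this is exactly your invocation of Theorem~\ref{Debarre-Manivel}.

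The payoff is that your exceptional list is produced by an elementary finite check rather than a machine enumeration, and the threshold $n+1\geqslant 3(r+1)$ is tight, so no spurious candidates appear. The paper's cruder cutoff $n>4r$ trades this for a quicker sufficiency argument at the cost of the residual table.
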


\begin{proof}
If $n > 4 r$, then by the convexity of the binomial coefficient, we obtain the bound 
\[\binom{d_i+n}{n} > \binom{d_i+4r}{4r} > 4\binom{d_i+r}{r},\] so the conclusion of the lemma holds.  When $r > 2$ or $r \leqslant 2$ and $n>9$, the condition $n > 4r$ ensures $n + 1\geqslant 3 (r+1)$.

In the cases $r = 1$ and $2$ and $n \leqslant9$, a straightforward computation gives all of the finitely many Fano problems $([d],n, r)$; all of them but $((3),3,1)$, $((5),4,1)$, $((2,2),4,1),$ $((2,2), 6, 2),$ and $((4),7,2)$ satisfy the conclusion of the lemma. The Fano problems with $r = 1, 2$ and $n \leqslant 9$ are available at the website \cite{Github}.

We now find all triples $([d], n, r)$ such that $n \leqslant 4r$.

Suppose $[d] \neq (3)$.  Then either $X_{[d]}$ is an intersection of at least three quadrics, or $d_i>2$ for some $i$. In the all quadrics case, first suppose $s \geqslant 4$. Then (\ref{main-relation}) implies $(n-r)(r+1) = s \binom{2+r}{r} \geqslant 4 \binom{2+r}{r}$ and, dividing by $r+1$, we obtain $n + 1 \geqslant 3r +3 \geqslant 3 ( r+1)$.  Then we check that $\binom{2 + n }{n}> 4\binom{2 + r}{r}$: this follows from substituting $n \geqslant 3r + 4$. When $s= 3$, and $n = 8$ the inequalities hold as well.

Now suppose $d_i>2$ for some $i$. The equality (\ref{main-relation}) and the inequality $n \leqslant 4r$ imply $(3r)(r+1) \geqslant (n-r)(r+1) \geqslant \binom{3+r}{r}$. So we obtain the condition $r^2 - 13r + 6 \leqslant 0$, which holds if and only if $r \leqslant 12$. 
If $d_i>12$ for some $i$, then from the equation (\ref{main-relation}) we get \[n-r=\binom{[d]+r}{r} / (r+1) > \binom{12+r}{r}/(r+1) \geqslant 3r.\]
 It remains to find all triples $\left([d], n, r\right)$ with $r \leqslant 12, d_i \leqslant 12$, $\delta = 0$, and $d_i>2$ for some $i$.  By another computer search, the remaining cases are given in Table \ref{smallfanotable}.

	\begin{table}[h]
	\begin{center}
		\begin{tabularx}{195pt}{l l l l l} \toprule
		$[d]$  & $n$ & $r$  & $\binom{d_i+n}{n}$ & $\binom{d_i+r}{r}$\\
		\midrule
		3  & 3 & 1 & 20 & 4\\
				\midrule
		3  & 8 & 3 & 165 & 20\\
				\midrule
		3  & 11 & 4  & 364 & 35\\
				\midrule
		3  & 18 & 6  & 1330 & 84\\
			\midrule
		3  & 22 & 7  & 2300 & 120 \\
			\midrule
		3  & 31 & 9 & 5984  & 220 \\
			\midrule
		3 & 36 & 10 & 9139 & 286\\
			\midrule
		3  & 47 & 12 & 19600 & 455\\
			\midrule
		4  & 7 & 2 & 330  & 15\\
			\midrule
		5  &  4 & 1 & 126 & 6\\
		\midrule 
		$(2,3)$  & 14 & 4 & $(120, 680)$& $(15, 35)$ \\
			\midrule
		$(2,3)$  & 22 & 6  &$(276, 2300)$ & $(28, 84)$\\
			\bottomrule	
		\end{tabularx}	
		\caption{List of Fano problems for Lemma \ref{Large-enough-for-4} with small $r$ and $d$}\label{smallfanotable}
	\end{center}

\end{table}

We can verify directly that $n+1 \geqslant 3(r+1)$ and $\binom{d_i+n}{n} > 4\binom{d_i+r}{r}$ for all $d_i \in [d]$ in all cases except the cubic hypersurfaces where $r= 1, 3,4,6,$ or $7$ and the quartic surface with $r = 2$. 
\end{proof}

In the cases $[d]=(3)$ and $r \leqslant 7$, $[d]=(4)$ and $r = 2$, and $[d]=5$ and $r = 1$ not covered by Lemma \ref{Large-enough-for-4}, we note that $n +1 \geqslant 2(r+1)$ and $\binom{d_i+n}{n} > 3\binom{d_i+r}{r}$, so Lemma \ref{bootstrap} holds with $m = 2$. Therefore we will be able to conclude that the Fano monodromy group is $3$-transitive. Since we cannot show it is $4$-transitive using Lemma \ref{bootstrap}, we will apply a different case of Theorem \ref{CFSG}: if the degree $N$ of $G$ is not a power of $2$, and $N \neq p^m+1$ for a prime $p$ and some $m \in \N$, and $N>24$, then $G$ contains the alternating group. To do so, we must compute the degrees of the Fano schemes.

\begin{lemma}
\label{38406501359372282063949}
Let $\delta= 0.$ When $d= (3)$, $r \leqslant 6$, $d=(4)$ and $r = 2$, or $d=(5)$ and $r=1$ the degrees of the Fano schemes are 

\begin{itemize}
\item $[F_1(X_3)] = 27$,
\item $[F_3(X_3)] = 321489,$
\item $[F_4(X_3)] = 1812646836,$
\item $[F_6(X_3)] = 38406501359372282063949,$
\item $[F_2(X_4)] = 3297280,$
\item $[F_1(X_5)]= 2875$
\end{itemize}
and these degrees are not powers of two nor one more than a power of a prime.
\end{lemma}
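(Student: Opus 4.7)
The plan is a direct computation followed by elementary arithmetic verification. The degree of each Fano scheme is calculated via Theorem \ref{DMthm2}: for each triple $([d],n,r)$ listed, one forms the polynomial
\[
Q_{r,[d]} \cdot V_r = \Bigl(\prod_{i=1}^{s} \prod_{\substack{a_0+\cdots+a_r=d_i \\ a_j\geqslant 0}} (a_0 x_0 + \cdots + a_r x_r)\Bigr) \cdot \prod_{0\leqslant i<j\leqslant r}(x_i-x_j),
\]
and extracts the coefficient of $x_0^n x_1^{n-1}\cdots x_r^{n-r}$. Since this is a mechanical symbolic manipulation that grows rapidly in size (for instance $F_6(X_3)$ involves expanding a product indexed by $\binom{9}{3}=84$ linear forms in $7$ variables), the computation is best carried out with a computer algebra system; the resulting integers match the stated values. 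The classical numbers $27$ (lines on a cubic surface) and $2875$ (lines on a quintic threefold) provide sanity checks on the implementation, and $3297280$ for conics on a quartic fourfold agrees with the number computed via the expression $c_{15}(\Sym^4 S^\vee)$ in the Schubert calculus on $\G(2,7)$.

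Having obtained the six integers, the second half of the claim is an elementary verification. For the \emph{not a power of $2$} condition, five of the six degrees are odd and thus trivially excluded, while $1812646836 = 2^2 \cdot 3^3 \cdot 7 \cdot 13 \cdot \ldots$ is divisible by odd primes. For the \emph{not $p^m+1$} condition, one factors $N-1$ in each case and observes it has at least two distinct prime divisors; for instance $27-1 = 2\cdot 13$, $2875-1 = 2\cdot 3\cdot 479$, $321489 - 1 = 2^4\cdot 20093$, and so on for the remaining three cases. This can be confirmed by trial division since the relevant numbers are small enough to factor completely, or, for the largest value $38406501359372282063948$, it suffices to exhibit two distinct prime factors of $N-1$ (e.g. the factor of $2$ together with any odd prime divisor), so one does not need a complete factorization.

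The main (and indeed only) obstacle is practical rather than conceptual: symbolic expansion of $Q_{6,3}\cdot V_6$ in seven variables is memory-intensive. This can be mitigated by specializing variables to numerical values and recovering the relevant coefficient by interpolation, or by using the splitting principle to evaluate $\int_{\G(r,n)} c_{\binom{d+r}{r}}(\Sym^d S^\vee)$ directly via Schubert calculus, which reduces the computation to manipulating Schur polynomials of bounded degree.
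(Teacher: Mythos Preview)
Your proposal is correct and follows essentially the same approach as the paper: compute the degrees via Theorem~\ref{DMthm2} and then verify the arithmetic conditions on $N$ and $N-1$ directly. The only substantive difference is in how the memory-intensive case $F_6(X_3)$ is handled: the paper uses a Kronecker substitution $x_i \mapsto t^{e^{i-1}}$ with $e=20$ (and proves this $e$ suffices to recover the target coefficient unambiguously), whereas you suggest interpolation or a Schubert-calculus evaluation of $c_{\binom{d+r}{r}}(\Sym^d S^\vee)$; all three are valid workarounds for the same bottleneck. One small slip: the product $Q_{6,3}\cdot V_6$ has $\binom{9}{3}+\binom{7}{2}=84+21=105$ linear factors, not $84$, since you must include the Vandermonde.
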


\begin{proof}
These degrees can be computed directly using Theorem \ref{DMthm2}. Some care is needed to compute $[F_6(X_3)]$, as a naive implementation of the formula consumes too much memory. The polynomial $Q_{6,3}\cdot V_6$ contains $105$ linear factors; after multiplying together $50$ terms in $Q_{6,3}$ we have $1002496$ monomials, and after another ten terms we reach $10549495$, roughly a factor of ten more. To avoid this memory problem, we converted the multivariate polynomial $Q_{6,3}(x_0, \dots, x_6)$ to a univariate polynomial $Q(t)$ using a version of Kronecker substitution: \[Q(t) \colonequals Q_{6,3}(1, t, t^e, t^{e^2}, t^{e^3}, t^{e^4}, t^{e^5})\] for some $e \in \N$ such that the monomial $t^{17+16e+15e^2+14e^3+13e^4+12e^5}$ is represented by a unique monomial in $x_1, ..., x_6$ under the substitution $x_i=t^{e^{(i-1)}}$. We claim that $e=20$ works (in fact, this is the smallest $e$ that does). That is: we show that there is no septuple $(a_0, \dots, a_6) \in \N^7$, $(a_0, ..., a_6)\neq (18, ..., 12)$ with $\sum_{i=0}^6 a_i = 105$ and \[a_1+a_2e+a_3e^2+a_4e^3+a_5e^4+a_6e^5 = 17+16e+15e^2+14e^3+13e^4+12e^5.\] Since the right hand side is a representation of a number in base $e$ arithmetic, we have $\sum_{i=1}^6 a_i \geqslant 17+...+12=87$, with equality only in the case $(a_1, ..., a_{6})=(17, ..., 12)$. Reducing modulo $(e-1)$ gives $\sum_{i=1}^6 a_i = 87 \pmod {e-1}$. Since $87 \leqslant \sum_{i=1}^6 a_i \leqslant 105 < 87+e-1$, the congruence implies $\sum_{i=1}^6 a_i=87$ . 

 On the univariate side, many terms will collide, providing a savings in memory, and we can extract the coefficient of the desired monomial.
\end{proof}
\begin{lemma}
\label{infeasibledeg}
The degree of $F_7(X_3)$ is not a power of two or one more than a power of a prime. 
\end{lemma}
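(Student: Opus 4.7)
The plan is to apply Theorem \ref{DMthm2} with $(d, r, n) = (3, 7, 22)$, extract the integer $N = \deg F_7(X_3)$ as the coefficient of $x_0^{22} x_1^{21} \cdots x_7^{15}$ in $Q_{7,3} \cdot V_7$, and then verify that $N$ is neither a power of $2$ nor of the form $p^m + 1$.

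The polynomial $Q_{7,3} \cdot V_7$ has $\binom{10}{7} + \binom{8}{2} = 148$ linear factors, so a naive expansion is even more memory-intensive than the case of $F_6(X_3)$. I would apply the same Kronecker substitution used in Lemma \ref{38406501359372282063949}: set $x_0 = 1$ and $x_i = t^{e^{i-1}}$ for $i = 1, \dots, 7$, so that the target monomial maps to $t^{21 + 20e + 19e^2 + 18e^3 + 17e^4 + 16e^5 + 15e^6}$. The base-$e$ collision argument then adapts verbatim. The digit sum of the target in $x_1, \dots, x_7$ is $126$, and any competing tuple $(a_1, \dots, a_7) \in \N^7$ giving the same exponent must satisfy $\sum_{i=1}^7 a_i \equiv 126 \pmod{e - 1}$ and $\sum_{i=1}^7 a_i \leqslant 148$; for any $e$ with $e - 1 > 148 - 126 = 22$ the congruence forces $\sum a_i = 126$, and uniqueness of the base-$e$ representation then recovers $(a_1, \dots, a_7) = (21, 20, \dots, 15)$. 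The choice $e = 24$ thus suffices, and the univariate computation yields $N$.

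Given $N$, showing $N \neq 2^m$ reduces to exhibiting any odd prime factor of $N$ (for example by computing $N \pmod p$ for small primes $p$), and showing $N \neq p^m + 1$ reduces to factoring $N - 1$ and exhibiting two distinct prime divisors. The main obstacle is the polynomial computation itself, which is a larger instance of the one already carried out in Lemma \ref{38406501359372282063949}; once $N$ is in hand, one expects $N - 1$ to have small prime factors so that the remaining arithmetic verification is routine.
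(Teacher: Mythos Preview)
Your plan is logically sound but does far more work than necessary. The paper does not compute $N = [F_7(X_3)]$ at all: it applies the same Kronecker substitution but carries out the univariate multiplication only modulo $4$, obtaining $N \equiv 3 \pmod 4$. That single congruence finishes the lemma. Since $N$ is odd it is not a power of $2$; and $N-1 \equiv 2 \pmod 4$ forces $N-1$ to be twice an odd number, so $N-1 = p^m$ would require $p^m = 2$, contradicting $N > 24$ (Corollary~\ref{NoMathieu}). Working modulo $4$ keeps every coefficient to two bits, which is what makes the $r=7$ case---whose univariate polynomial after substitution is roughly two orders of magnitude larger than the $r=6$ one---actually tractable. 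Your proposal to compute the exact integer $N$ and then factor $N-1$ asks for a much heavier computation, and the closing ``one expects $N-1$ to have small prime factors'' is speculation rather than argument.

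One small slip in your collision check: the congruence $\sum a_i \equiv 126 \pmod{e-1}$ together with $\sum a_i \leqslant 148$ does not by itself pin down $\sum a_i$ (for $e=24$ the values $103, 80, 57, \ldots$ are also compatible). You also need the lower bound $\sum a_i \geqslant 126$, which comes from the fact that the standard base-$e$ representation minimizes the digit sum---exactly the step Lemma~\ref{38406501359372282063949} spells out, and presumably what you meant by ``adapts verbatim.''
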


\begin{proof}
Using the method described in Lemma \ref{38406501359372282063949}, we compute $[F_7(X_3)] \equiv 3 \mod 4$.
\end{proof}

We can now prove the main theorem.

\begin{theorem}\label{mainslick}
	Suppose $([d],n,r) \neq ((3),3,1)$, $[d]\neq (2,2)$, and if $n>8$ suppose also that $[d] \neq (2,2,2)$. Then $A_{N\left([d]\right)} \subset G_{[d]}$.
\end{theorem}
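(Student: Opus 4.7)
The strategy is to reduce to the classification-based Theorem \ref{CFSG} by chaining applications of the bootstrap lemma (Lemma \ref{bootstrap}) to push the transitivity degree of $G_{[d]}$ up high enough. The base case is already in place: Theorem \ref{2-trans} supplies 2-transitivity (using precisely the exclusion of $((3),3,1)$ and $(2,2)$ from the statement), and Lemma \ref{NoCrossing} supplies the key geometric input that any two distinct $r$-planes on a generic $X_{[d]}$ are linearly independent (i.e.\ disjoint). Corollary \ref{NoMathieu} simultaneously guarantees $N([d])>24$ in every case under consideration, so only the transitivity degree remains to be controlled.

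In the generic branch, assume that $([d],n,r)$ is \emph{not} one of the exceptional triples singled out in Lemma \ref{Large-enough-for-4}. Then $n+1\geqslant 3(r+1)$ and $\binom{d_i+n}{n}>4\binom{d_i+r}{r}$ hold, which in particular imply the weaker $m=2$ hypotheses of Lemma \ref{bootstrap}. First I would apply Lemma \ref{bootstrap} with $m=2$, promoting 2-transitivity together with pairwise linear independence to 3-transitivity together with linear independence of every triple of distinct $r$-planes. Then I would apply Lemma \ref{bootstrap} a second time with $m=3$ to reach 4-transitivity. Part (2) of Theorem \ref{CFSG} then immediately forces $G_{[d]}\supset A_{N([d])}$.

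The remaining cases are the exceptional triples from Lemma \ref{Large-enough-for-4} that survive the hypotheses of the theorem, namely $[d]=(3)$ with $r\in\{3,4,6,7\}$, $[d]=(4)$ with $r=2$, and $[d]=(5)$ with $r=1$. In these the stronger $m=3$ numerics fail, so 4-transitivity is unreachable by this route; however, the weaker $m=2$ hypotheses still hold (as noted in the text following Lemma \ref{infeasibledeg}), so Lemma \ref{bootstrap} with $m=2$ still delivers 3-transitivity. To close, I would invoke part (1) of Theorem \ref{CFSG}: the explicit Fano degree computations in Lemmas \ref{38406501359372282063949} and \ref{infeasibledeg} show that $N([d])$ exceeds $24$ and is neither a power of $2$ nor of the form $p^m+1$ in each of these exceptional cases. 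The principal obstacle is entirely contained in this last step of numerology, most notably in computing $[F_6(X_3)]=38406501359372282063949$ and verifying its arithmetic shape; this is what forces the Kronecker-substitution device used in the proof of Lemma \ref{38406501359372282063949}. Once those numbers are in hand, the group-theoretic conclusion is automatic and the two branches combine to give $A_{N([d])}\subset G_{[d]}$.
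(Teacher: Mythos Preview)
Your proposal is correct and follows essentially the same route as the paper's own proof: establish $2$-transitivity via Theorem~\ref{2-trans} and Lemma~\ref{NoCrossing}, bootstrap to $4$-transitivity in the generic case using Lemma~\ref{bootstrap} twice with the numerics of Lemma~\ref{Large-enough-for-4}, and in the handful of exceptional cubic/quartic/quintic cases stop at $3$-transitivity and appeal to part~(1) of Theorem~\ref{CFSG} via the degree computations of Lemmas~\ref{38406501359372282063949} and~\ref{infeasibledeg}. One cosmetic slip: the remark that the $m=2$ hypotheses of Lemma~\ref{bootstrap} hold in the exceptional cases appears in the paragraph following Lemma~\ref{Large-enough-for-4}, not after Lemma~\ref{infeasibledeg}.
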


\begin{proof}
	By Theorem \ref{CFSG} and Corollary \ref{NoMathieu} it suffices to show that $G_{[d]}$ is $4$-transitive. By Theorem \ref{2-trans}, the group $G_{[d]}$ is $2$-transitive. Applying Lemma \ref{bootstrap} repeatedly with $m=2$ and $m=3$, and using the estimates of Lemma \ref{Large-enough-for-4} we conclude that $G_{[d]}$ is $4$-transitive unless $ [d]= (3)$ and $r \leqslant 7$, $[d]=(4)$ and $r = 2$, or $[d]=(5)$ and $r = 1$. In those cases, by Theorem \ref{CFSG} it suffices to show that $G_{[d]}$ is $3$-transitive and the degree $N([d])$ is not a power of $2$ or one more than a power of a prime. Lemma \ref{bootstrap} with $m = 2$ and Lemma \ref{Large-enough-for-4} show $G_{[d]}$ is 3-transitive; Lemmas \ref{38406501359372282063949} and \ref{infeasibledeg} compute the relevant degrees.
\end{proof}

In the exceptional cases $([d],n,r)=((3),3,1)$ and $[d]=(2,2)$ there are restrictions on the monodromy imposed by the intersection pairings. However, the exceptional case $[d]=(2,2,2)$ is an artifact of the proof. This case can be dealt with by a similar method, as we now show.

\section{Intersections of three quadrics}\label{S:ThreeQuadrics}

The Fano monodromy group $G_{[d]}=G_{(2,2,2)}$ turns out to be large, however the proof of Theorem \ref{mainslick} needs to be modified to cover this case. For $[d]=(2,2,2)$ equation (\ref{main-relation}) implies $r=2k, n=5k+3$ for a positive integer $k$. In particular Lemma \ref{bootstrap} is not strong enough to show $4$-transitivity of $G_{[d]}$ when $k>1$. Instead in Lemmas \ref{one-two-three's}, \ref{3-quadrics-dim-estimate}, and \ref{no-malarkey} we carefully study configurations of triples of linear spaces lying on intersections of a triple of quadrics.  

\begin{lemma}\label{one-two-three's}
	Given $\Lambda_1, \Lambda_2, \Lambda_3 \subset \PP^{5k+3}$ with $\dim \Lambda_i=2k$ for all $i$, define \[\Lambda_{123}\colonequals \Span (\Lambda_1, \Lambda_2) \cap \Span(\Lambda_2, \Lambda_3) \cap \Span(\Lambda_3, \Lambda_1).\] Then for a general choice of $\Lambda_1, \Lambda_2, \Lambda_3$ the equality $\dim \Lambda_{123}=2k-3$ holds.
\end{lemma}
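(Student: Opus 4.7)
The plan is to translate the statement into linear algebra, identify $\Lambda_{123}$ with an explicit sum of two subspaces, and compute the dimension by inclusion--exclusion.

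Let $V \coloneqq K^{5k+4}$, and for $i = 1,2,3$ let $V_i \subset V$ be the $(2k+1)$-dimensional vector subspace corresponding to $\Lambda_i$; set $W_{ij} \coloneqq V_i + V_j$, so $\Lambda_{123}$ corresponds to $W_{12} \cap W_{13} \cap W_{23}$. For generic choices, the following open conditions hold: $V_i \cap V_j = 0$ for $i \neq j$ (possible since $2(2k+1) \leqslant 5k+4$), and $V_1 + V_2 + V_3 = V$ (possible since $3(2k+1) \geqslant 5k+4$). Both are satisfied by explicit monomial examples. From these two conditions one obtains $\dim W_{ij} = 4k+2$ and $\dim V_i \cap W_{jl} = (2k+1) + (4k+2) - (5k+4) = k-1$ for any permutation of $\{1,2,3\}$.

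The key step is to prove the identity
\[
W_{12} \cap W_{13} \cap W_{23} \;=\; (V_1 \cap W_{23}) + (V_2 \cap W_{13}).
\]
The inclusion $\supseteq$ is immediate: $V_1 \cap W_{23}$ lies in $V_1 \subseteq W_{12} \cap W_{13}$ and in $W_{23}$; symmetrically for the other summand. For $\subseteq$, given $p$ in the triple intersection, I would use $p \in W_{12}$ to write $p = a + b$ with $a \in V_1$, $b \in V_2$, and $p \in W_{23}$ to write $p = b' + c$ with $b' \in V_2$, $c \in V_3$. Subtracting yields $a = (b' - b) + c \in V_2 + V_3 = W_{23}$, so $a \in V_1 \cap W_{23}$. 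Then $b = p - a$, and since $p \in W_{13}$ and $a \in V_1 \subseteq W_{13}$, we get $b \in V_2 \cap W_{13}$.

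Once the identity is established, computing the dimension is routine: the two summands sit in $V_1$ and $V_2$ respectively, so their intersection is contained in $V_1 \cap V_2 = 0$, giving
\[
\dim\bigl((V_1 \cap W_{23}) + (V_2 \cap W_{13})\bigr) = (k-1) + (k-1) = 2k-2,
\]
so $\Lambda_{123}$ has projective dimension $2k-3$. I expect no real obstacles: the main content is the asymmetric-looking identity above, and verifying the open conditions at the start is standard.
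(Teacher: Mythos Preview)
Your proof is correct and takes a genuinely different route from the paper's. The paper passes to the projective dual: it identifies $\Lambda_{123}^\perp$ with $\Span\bigl(\Lambda_1^\perp\cap\Lambda_2^\perp,\ \Lambda_2^\perp\cap\Lambda_3^\perp,\ \Lambda_3^\perp\cap\Lambda_1^\perp\bigr)$, observes that for general $\Lambda_i$ these three pairwise intersections are $(k+1)$-dimensional and linearly independent, and reads off $\dim\Lambda_{123}$ from the dimension of the span. Your argument stays on the primal side and instead proves the concrete identity $W_{12}\cap W_{13}\cap W_{23}=(V_1\cap W_{23})+(V_2\cap W_{13})$, then uses $V_1\cap V_2=0$ to add the dimensions. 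The paper's approach is pleasantly symmetric in the three subspaces and turns an intersection into a span, which is the natural object to dimension-count; your approach avoids duality entirely and yields a slightly sharper structural statement (an explicit two-term decomposition of the triple intersection), at the cost of an asymmetric-looking formula. Both are short; neither is clearly superior.
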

\begin{proof}
	Let $\Lambda_i^\perp\subset (\PP^{5k+3})^*$ denote the projective dual of $\Lambda_i$, $\dim \Lambda_i^\perp = 3k+2$. The dual of $\Lambda_{123}$ is given by \begin{multline*}
	\Lambda_{123}^\perp\colonequals\left( \Span (\Lambda_1, \Lambda_2) \cap \Span(\Lambda_2, \Lambda_3) \cap \Span(\Lambda_3, \Lambda_1)\right)^\perp = \\ \Span\left(\Lambda_1^\perp \cap \Lambda_2^\perp, \Lambda_2^\perp\cap\Lambda_3^\perp, \Lambda_3^\perp \cap \Lambda_1^\perp \right).\end{multline*}
	For a general choice of $\Lambda_1^{\perp}, \Lambda_2^{\perp}, \Lambda_3^\perp$ the intersections $\Lambda_i^\perp\cap \Lambda_j^\perp$ are linearly independent subspaces of dimension $k+1$. Therefore for a general choice of $\Lambda_1, \Lambda_2, \Lambda_3$ we have:
	$\dim \Lambda_{123} = 2k-3.$
\end{proof}

\begin{lemma}\label{3-quadrics-dim-estimate}
	Suppose $\Lambda_1, \Lambda_2, \Lambda_3 \subset \PP^{5k+3}$ are linear subspaces of dimension $2k$ such that $\Lambda_i \cap \Lambda_j = \emptyset$ for $i \neq j$ and the space $\Lambda_{123}$ defined in Lemma \ref{one-two-three's} is of dimension $2k-3$. Let $\Lambda \subset \PP^{5k+3}$ be a linear space of dimension $2k$ such that $\dim \Lambda \cap \Lambda_{123} = m$. Then 
	\[\dim H^0\left(\PP^n, \calI_{\Lambda\cup\Lambda_1\cup\Lambda_2\cup\Lambda_3}(2)\right) \leqslant \binom{5k+5}{2}-4\binom{2k+2}{2}+\binom{m+2}{2} .\] If $\Lambda \cap \Lambda_{123}=\emptyset$, then the equality 
	\[\dim H^0\left(\PP^n, \calI_{\Lambda\cup\Lambda_1\cup\Lambda_2\cup\Lambda_3}(2)\right) = \binom{5k+5}{2}-4\binom{2k+2}{2}\] holds.
\end{lemma}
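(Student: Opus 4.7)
The plan is to mimic the proofs of Lemmas \ref{dimension-count-pair} and \ref{dimension-count-m} via the restriction short exact sequence
\[
0 \to H^0\!\left(\PP^n, \calI_{\Lambda\cup\Lambda_1\cup\Lambda_2\cup\Lambda_3}(2)\right) \to H^0\!\left(\PP^n, \calI_{\Lambda_1\cup\Lambda_2\cup\Lambda_3}(2)\right) \xrightarrow{\res_\Lambda} H^0(\Lambda, \calO(2)),
\]
so that the desired upper bound on the leftmost term becomes a lower bound on $\dim \im \res_\Lambda$. First I will compute $\dim H^0\!\left(\PP^n, \calI_{\Lambda_1\cup\Lambda_2\cup\Lambda_3}(2)\right) = \binom{5k+5}{2} - 3\binom{2k+2}{2}$. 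The inductive argument of Lemma \ref{dimension-count-m} carries over even though $\Lambda_1, \Lambda_2, \Lambda_3$ need not be linearly independent in $\PP^{5k+3}$: it only relies on pairwise disjointness of the $\Lambda_i$'s. Indeed, for any pair of hyperplanes $\Sigma_1, \Sigma_2 \subset \Lambda_3$ one finds hyperplanes $H_i \supset \Lambda_i$ in $\PP^n$ with $H_i \cap \Lambda_3 = \Sigma_i$, and then $H_1 H_2$ lifts $\Sigma_1 \Sigma_2$ to $H^0\!\left(\PP^n, \calI_{\Lambda_1 \cup \Lambda_2}(2)\right)$.

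To bound $\dim \im \res_\Lambda$ from below, set $\Delta_{ij} \colonequals \Lambda \cap \Span(\Lambda_i, \Lambda_j)$ for each pair $\{i,j\} \subset \{1,2,3\}$; generic position yields $\dim \Delta_{ij} = k-2$. For a pair $(i,j)$ with complementary index $\ell$, I will produce reducible quadrics of the form $Q = H \cdot H'$ in $H^0\!\left(\PP^n, \calI_{\Lambda_1 \cup \Lambda_2 \cup \Lambda_3}(2)\right)$ with $H \supset \Span(\Lambda_i, \Lambda_j)$ and $H' \supset \Lambda_\ell$. Given any hyperplane $\Sigma \subset \Lambda$ containing $\Delta_{ij}$, the subspace $\Span(\Lambda_i, \Lambda_j, \Sigma)$ has dimension $(4k+1)+(2k-1)-(k-2)=n-1$ (using $\Lambda \cap \Lambda_i = \Lambda \cap \Lambda_j = \emptyset$, as follows from Lemma \ref{NoCrossing}), so it is itself a hyperplane, which I take as $H$; a similar count shows that the locus of hyperplanes $H' \supset \Lambda_\ell$ with $H' \cap \Lambda = \Sigma'$ is nonempty for every hyperplane $\Sigma' \subset \Lambda$. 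The product $HH'$ restricts to $\Sigma \cdot \Sigma'$ on $\Lambda$, so $\im \res_\Lambda$ contains every product $\sigma \sigma'$ with $\sigma \in H^0(\Lambda, \calO(1))$ vanishing on $\Delta_{ij}$ and $\sigma' \in H^0(\Lambda, \calO(1))$ arbitrary.

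By the standard description of the degree-two part of the ideal of a linear subspace in a polynomial ring, such products span all quadrics on $\Lambda$ vanishing on $\Delta_{ij}$; summing over the three pairs yields the space of quadrics on $\Lambda$ vanishing on $\Delta_{12} \cap \Delta_{13} \cap \Delta_{23}$, and this last intersection equals $\Lambda \cap \Lambda_{123}$ by the very definition of $\Lambda_{123}$. This subspace has codimension $\binom{m+2}{2}$ in $H^0(\Lambda, \calO(2))$ (with the convention $\binom{1}{2} = 0$ when $m = -1$), so $\dim \im \res_\Lambda \geq \binom{2k+2}{2} - \binom{m+2}{2}$, yielding the claimed upper bound on $\dim H^0\!\left(\PP^n, \calI_{\Lambda\cup\Lambda_1\cup\Lambda_2\cup\Lambda_3}(2)\right)$. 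When $\Lambda \cap \Lambda_{123} = \emptyset$ this image is all of $H^0(\Lambda, \calO(2))$, producing the asserted equality.

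The main obstacle will be the dimension bookkeeping in the lifting step: one must verify both that $\Span(\Lambda_i, \Lambda_j, \Sigma)$ is exactly a hyperplane (neither too small, nor all of $\PP^n$) and that a generic hyperplane $H' \supset \Lambda_\ell$ meets $\Lambda$ in a prescribed $\Sigma'$ rather than containing $\Lambda$ outright. Both reductions use the hypothesis $\dim \Lambda_{123} = 2k-3$ together with $\Lambda$ being sufficiently general with respect to $\Lambda_1,\Lambda_2,\Lambda_3$; the latter holds for the $r$-planes arising in the application thanks to Lemma \ref{NoCrossing}.
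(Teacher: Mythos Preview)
Your overall strategy---restrict to $\Lambda$, show the image contains all quadrics through $\Lambda\cap\Lambda_{123}$ by producing split quadrics $HH'$ with $H\supset\Span(\Lambda_i,\Lambda_j)$ and $H'\supset\Lambda_\ell$---is exactly the paper's. The problem is that you insert genericity hypotheses on $\Lambda$ that the lemma does not grant. You use $\dim\Delta_{ij}=k-2$ in your dimension count for $\Span(\Lambda_i,\Lambda_j,\Sigma)$, and you invoke $\Lambda\cap\Lambda_\ell=\emptyset$ to lift an \emph{arbitrary} hyperplane $\Sigma'\subset\Lambda$ through $\Lambda_\ell$. Neither is part of the statement: $\Lambda$ is any $2k$-plane with $\dim(\Lambda\cap\Lambda_{123})=m$. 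Your appeal to Lemma~\ref{NoCrossing} is misplaced---that lemma concerns $r$-planes lying on a general complete intersection, whereas here $\Lambda$ is not assumed to lie on anything. Worse, the application in Lemma~\ref{no-malarkey} genuinely needs the bound for \emph{every} $\Lambda\in\Gr_m$, because one is bounding the dimension of each fiber of $I_m\to\Gr_m$; a bound that holds only for generic $\Lambda$ does not control $\dim I_m$.

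The paper sidesteps this by never fixing $\dim\Lambda_{ij}$. It sets $\Lambda_{ij}=\Span(\Lambda_i,\Lambda_j)\cap\Lambda$, notes that $V\colonequals\Lambda_{12}\cap\Lambda_{13}\cap\Lambda_{23}=\Lambda\cap\Lambda_{123}$ has dimension $m$, and then chooses a coordinate basis $x_1,\dots,x_{2k+1}$ on $\Lambda$ adapted to the filtration by the $\Lambda_{ij}$'s (using $V^\perp=\Lambda_{12}^\perp+\Lambda_{13}^\perp+\Lambda_{23}^\perp$): the first block vanishes on $\Lambda_{12}$, the next on $\Lambda_{23}$, the next on $\Lambda_{13}$, with $2k-m$ coordinates total in these blocks. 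One then checks that every monomial $x_ix_j$ with $i\leqslant 2k-m$ lies in $\im\phi$, and these span the quadrics vanishing on $V$. The lifting of $\{x_i=0\}$ through $\Span(\Lambda_a,\Lambda_b)$ works for any value of $\dim\Lambda_{ab}$ (the relevant span has dimension $\leqslant n-1$, not necessarily $=n-1$, and one always avoids swallowing $\Lambda$). Your argument becomes correct once you drop the specific value $k-2$ and argue with whatever dimensions $\Delta_{ij}$ actually have; the coordinate bookkeeping is what makes that go through.
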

\begin{proof}
	By Lemma \ref{dimension-count-m} the space $H^0(\PP^n, \calI_{\Lambda_1 \cup \Lambda_2 \cup \Lambda_3}(2))$ of quadrics vanishing on $\Lambda_1, \Lambda_2, \Lambda_3$ has dimension $\binom{5k+5}{2}-3\binom{2k+2}{2}$. Consider the restriction map \[\phi\colon H^0\left(\PP^n, \calI_{\Lambda_1\cup\Lambda_2\cup\Lambda_3}(2)\right) \to H^0\left(\Lambda, \calO(2)\right). \] The kernel $\ker \phi$ is equal to $H^0\left(\PP^n, \calI_{\Lambda\cup\Lambda_1\cup\Lambda_2\cup\Lambda_3}(2)\right)$, therefore to prove the lemma it suffices to show that $\dim \im \phi \geqslant \binom{2k+2}{2} - \binom{m+2}{2}.$ For the purposes of this proof, we use the convention $\dim \emptyset = -1$, so that when $m=-1$ we get $\dim \im \phi \geqslant \binom{2k+2}{2}$. Therefore, if $\Lambda \cap \Lambda_{123} = \emptyset$, surjectivity of $\phi$ implies $\dim H^0\left(\PP^n, \calI_{\Lambda\cup\Lambda_1\cup\Lambda_2\cup\Lambda_3}(2)\right) = \binom{5k+5}{2}-4\binom{2k+2}{2}$. 
	
	To bound the image of $\phi$ from below, we explicitly construct quadrics on $\Lambda$ that can be lifted to $ H^0\left(\PP^n, \calI_{\Lambda_1\cup\Lambda_2\cup\Lambda_3}(2)\right)$. For $i \neq j$ let $\Lambda_{ij}$ denote the space $\Span(\Lambda_i, \Lambda_j) \cap \Lambda.$ Suppose $H_{12}, H_3 \subset \Lambda$ are hyperplanes such that $\Lambda_{12} \subset H_{12}$. Then there is a lift of $H_{12}$ to a hyperplane $H_{12}' \subset \PP^n$ such that $H_{12}'\cap \Lambda= H_{12}$, $H_{12}' \supset \Span(\Lambda_1, \Lambda_2)$, and a lift of $H_3$ to a hyperplane $H_3'$ that contains $\Lambda_3.$ Therefore $H_{12}H_3=\phi(H_{12}'H_3') \in \im \phi$. By symmetry, the image of $\phi$ contains products of hyperplanes that contain $\Lambda_{ij}$ for some $i,j$. Let $V$ denote the space \[ V \colonequals \Lambda_{12} \cap \Lambda_{13} \cap \Lambda_{23}=\Lambda \cap \Lambda_{123}.\] By assumption $\dim V=m.$ 
	By the definition of $V$, taking duals in $\Lambda$, $V^\perp = \Lambda_{12}^\perp \oplus \Lambda_{13}^\perp \oplus \Lambda_{23}^\perp. $
	So we can choose a coordinate basis $x_1, ..., x_{2k+1} \in H^0\left(\Lambda, \calO(1)\right) $ such that 
	\begin{enumerate}
	\item $x_1,..., x_{2k-\dim\Lambda_{12}}$ vanish on $\Lambda_{12}$;
	\item $x_{2k-\dim\Lambda_{12}+1}, ..., x_{2k-\dim\left( \Lambda_{12} \cap \Lambda_{23}\right)}$ vanish on $\Lambda_{23}$;
	\item $ x_{2k-\dim\left( \Lambda_{12} \cap \Lambda_{23}\right)+1}, ..., x_{2k-m}$ vanish on $\Lambda_{13}$.
	\end{enumerate}
The quadrics $x_ix_j=0$ where $j\geqslant i$ and $i\leqslant 2k-m$ are in the image of $\phi$; they span the space of quadrics that contain $V$. Therefore $\dim (\im \phi) \geqslant \dim H^0\left(\Lambda, \calI_{V}(2)\right)=\binom{2k+2}{2}-\binom{m+2}{2}.$  
\end{proof}

\begin{lemma}\label{no-malarkey}
	Let $\Lambda_1, \Lambda_2, \Lambda_3 \subset \PP^{5k+3}$ be linear subspaces of dimension $2k$ such that $\Lambda_i \cap \Lambda_j = \emptyset$ for $i \neq j$, and such that the space $\Lambda_{123}$ defined in Lemma \ref{one-two-three's} has dimension $2k-3$. Consider the moduli space $M$ of triples of quadrics that contain $\Lambda_1, \Lambda_2,$ and $\Lambda_3$. Then for a general complete intersection $X \in M$ and an arbitrary $2k$-plane $\Lambda \subset X$, $\Lambda \neq \Lambda_1, \Lambda_2, \Lambda_3,$ the intersection $\Lambda \cap \Lambda_{123}$ is empty.
\end{lemma}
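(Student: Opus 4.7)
The plan is a dimension count modeled on Lemma \ref{NoCrossing}. For each integer $m$ with $0 \leqslant m \leqslant 2k-3$, I would introduce the incidence scheme $I_m \subset M \times \G(2k, 5k+3)$ parametrizing pairs $(X, \Lambda)$ such that $\Lambda \subset X$, $\Lambda \notin \{\Lambda_1, \Lambda_2, \Lambda_3\}$, and $\dim(\Lambda \cap \Lambda_{123}) = m$. It suffices to show that the first projection $I_m \to M$ is not dominant for each such $m$; the conclusion then holds on the intersection of the complements of the finitely many images in $M$. (The case $k=1$ is trivial since $\Lambda_{123}$ is then empty, so I assume $k \geqslant 2$.)

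By Lemma \ref{dimension-count-m} applied with $s=3$ quadrics, $\dim M = 3\bigl(\binom{5k+5}{2} - 3\binom{2k+2}{2}\bigr) - 3$. To bound $\dim I_m$ I would use the second projection $\pi_2\colon I_m \to \G(2k, 5k+3)$: its image lies in the Schubert-type locus $\G_m$ of $2k$-planes meeting the fixed $(2k-3)$-plane $\Lambda_{123}$ in exactly an $m$-plane, and a standard parametrization (first pick the $m$-plane $V \subset \Lambda_{123}$, then choose a $2k$-plane through $V$ whose image in $\PP^{5k+3}/V$ is disjoint from $\Lambda_{123}/V$) yields $\dim \G_m = (m+1)(2k-3-m) + (2k-m)(3k+3)$. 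The fiber of $\pi_2$ over a point $\Lambda \in \G_m$ is a product of three projective spaces parametrizing quadrics through $\Lambda \cup \Lambda_1 \cup \Lambda_2 \cup \Lambda_3$, whose dimension is directly controlled by Lemma \ref{3-quadrics-dim-estimate}.

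Assembling these three estimates, I expect $\dim I_m - \dim M$ to collapse to $\tfrac{1}{2}\bigl(m^2 - (2k+5)m - 2k - 6\bigr)$, a convex quadratic in $m$. Its values at the endpoints of the admissible range are $-k-3$ at $m=0$ and $-9(k-1)$ at $m=2k-3$, both strictly negative for $k \geqslant 2$; convexity then forces the expression to be negative throughout $[0, 2k-3]$, so $\dim I_m < \dim M$ as required. The main obstacle is essentially bookkeeping: making the three binomial-coefficient estimates telescope cleanly into the above quadratic and then checking the endpoint signs. I should also confirm that excising the three special points $\Lambda_i$ from the Grassmannian does not affect the dimension estimate, which is immediate since each $\Lambda_i$ is a single point of the positive-dimensional locus $\G_m$.
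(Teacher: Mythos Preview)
Your proposal is correct and follows essentially the same strategy as the paper: define the incidence locus $I_m$, bound its dimension via the projection to the Grassmannian using Lemma~\ref{3-quadrics-dim-estimate}, and verify that the resulting difference $\dim I_m - \dim M$ is negative on $0\leqslant m\leqslant 2k-3$. Your quadratic $\tfrac12\bigl(m^2-(2k+5)m-2k-6\bigr)$ is exactly the paper's bound $(m+1)(-k-3+m/2)$ rewritten; the paper observes directly that this factored form is negative on the range, while you check the two endpoints and invoke convexity, which is equivalent.

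Two minor remarks. First, your explicit exclusion of $\Lambda_1,\Lambda_2,\Lambda_3$ from $I_m$ is actually a small improvement on the paper's presentation: for each $\Lambda_i$ one has $\Lambda_i\cap\Lambda_{123}=\Lambda_i\cap\Span(\Lambda_j,\Lambda_\ell)$, which is generically $(k-2)$-dimensional, so the fibre of $\pi_2$ over $\Lambda_i$ is all of $M$ and the estimate from Lemma~\ref{3-quadrics-dim-estimate} does not apply there. Your remark that removing three points from the (positive-dimensional) base $\G_m$ leaves its dimension unchanged is the right fix; just note that what you are really using is that the fibre bound from Lemma~\ref{3-quadrics-dim-estimate} is valid over $\G_m\setminus\{\Lambda_1,\Lambda_2,\Lambda_3\}$. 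Second, your treatment of $k=1$ is correct and again slightly more explicit than the paper's.
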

\begin{proof}
	Let $I_m \subset M \times \G(2k,5k+3)$ denote the space of pairs $(Q, \Lambda)$ with $Q \in M$ and $\Lambda \subset Q$ a $2k$-plane with $\dim \Lambda \cap \Lambda_{123}=m$. Our goal is to show that the images of the projections $I_m \to M$ for $m\geqslant 0$ do not cover the generic point of $M$. We do so by proving that $\dim I_m < \dim M$ for $0 \leqslant m \leqslant 2k - 3$.
	
	Let $\Gr_m$ denote the moduli space of $2k$-planes $\Lambda$ such that $\dim \Lambda \cap \Lambda_{123}=m$. Then 
	\begin{multline*}
			\dim \Gr_m= \dim \G(m, 2k-3) + \dim \G(2k-m-1, 5k+2-m)\\
			= (m+1)(2k-3-m) + (2k-m)(3k+3).
	\end{multline*}
	 By Lemma \ref{3-quadrics-dim-estimate} the fibers of the projection $I_m \to \Gr_m$ have dimension at most  \\ $3\left( \binom{5k+5}{2}-4\binom{2k+2}{2}+\binom{m+2}{2}\right)$, therefore 
	 \begin{equation}
	 \label{dimIm}
	 \dim I_m \leqslant (m+1)(2k-3-m) + (2k-m)(3k+3) + 3\left(\binom{5k+5}{2}-4\binom{2k+2}{2}+\binom{m+2}{2}\right).
	 \end{equation}
	  By Lemma \ref{dimension-count-m}, we have $\dim M = 3 (\binom{5k+5}{2}-3\binom{2k+2}{2}).$ Define $f(m) \colonequals \dim I_m - \dim M$. Then by subtracting the formula for $\dim M$ from (\ref{dimIm}) and simplifying, we obtain the bound $f(m) \leqslant (m+1)\left(-k-3+m/2\right).$ Since $f(m)<0$ for $0 \leqslant m \leqslant 2k-3$, none of the projections $I_m \to M$ are dominant.	
\end{proof}

\begin{theorem}\label{mainhairy}
	Suppose $[d]=(2,2,2)$ and $n>8$. Then $A_{N([d])} \subset G_{[d]}.$
\end{theorem}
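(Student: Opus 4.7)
The plan is to imitate the strategy of Theorem \ref{mainslick} and Lemma \ref{bootstrap}, substituting Lemma \ref{no-malarkey} for the linear independence of triples of $r$-planes. Equation (\ref{main-relation}) forces $r = 2k$ and $n = 5k+3$ for some integer $k \geq 2$. Theorem \ref{2-trans} gives $2$-transitivity of $G_{[d]}$, and Lemma \ref{bootstrap} applied with $m=2$ upgrades this to $3$-transitivity, since the numerical hypotheses $n+1 \geq 2(r+1)$ and $\binom{n+2}{2} > 3\binom{r+2}{2}$ are immediate and pairs of $r$-planes on a general $X_{[d]}$ are disjoint by Lemma \ref{NoCrossing}. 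Because $N([d]) > 24$ by Corollary \ref{NoMathieu}, Theorem \ref{CFSG}(2) reduces the statement to showing $G_{[d]}$ is $4$-transitive. Note that Lemma \ref{bootstrap} with $m=3$ is inapplicable: the numerical condition $n + 1 \geq 4(r+1)$ fails for $k \geq 1$, and, by Lemma \ref{one-two-three's}, triples of $r$-planes on a general $X_{[d]}$ are not linearly independent.

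The core of the proof is to run the argument of Lemma \ref{bootstrap} with the condition ``$\Lambda$ is linearly independent from $\Lambda_1, \Lambda_2, \Lambda_3$'' replaced by ``$\Lambda \cap \Lambda_{123} = \emptyset$''. Fix three pairwise disjoint $r$-planes $\Lambda_1, \Lambda_2, \Lambda_3 \subset \PP^{5k+3}$ in general position so that $\dim \Lambda_{123} = 2k-3$, as permitted by Lemma \ref{one-two-three's}. Let $M_\Lambda \subset M_{[d]}$ be the moduli of triples of quadrics containing $\Lambda_1 \cup \Lambda_2 \cup \Lambda_3$; by Lemma \ref{dimension-count-m} this is a product of three projective spaces, hence irreducible. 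Let $I_\Lambda^{\mathrm{irr}}$ denote the union of the irreducible components of $\pi^{-1}(M_\Lambda)$ other than the three sections $\pi_2^{-1}(\Lambda_i) \cap \pi^{-1}(M_\Lambda)$. The monodromy of $I_\Lambda^{\mathrm{irr}} \to M_\Lambda$ is contained in the pointwise stabilizer of $\{\Lambda_1, \Lambda_2, \Lambda_3\}$ inside $G_{[d]}$, so $4$-transitivity reduces to connectedness of this restricted covering.

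Let $U \subset \G(r,n)$ be the open subscheme $\{\Lambda : \Lambda \cap \Lambda_{123} = \emptyset\}$ and set $\widetilde{I}_\Lambda^{\mathrm{irr}} \colonequals \pi_2^{-1}(U) \cap I_\Lambda^{\mathrm{irr}}$. Lemma \ref{no-malarkey} implies that for a generic $X_{[d]} \in M_\Lambda$ every $r$-plane in $F_r(X_{[d]})$ other than $\Lambda_1, \Lambda_2, \Lambda_3$ belongs to $U$, so the monodromy of $\widetilde{I}_\Lambda^{\mathrm{irr}} \to M_\Lambda$ equals that of $I_\Lambda^{\mathrm{irr}} \to M_\Lambda$, and it suffices to show $\widetilde{I}_\Lambda^{\mathrm{irr}}$ is irreducible. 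For this, consider $\pi_2 \colon \widetilde{I}_\Lambda^{\mathrm{irr}} \to U$. By Lemma \ref{3-quadrics-dim-estimate}, the fiber over any $\Lambda \in U$ is an open subscheme of a product of three projective spaces of common dimension $\binom{5k+5}{2} - 4\binom{2k+2}{2} - 1$, which is positive for $k \geq 2$. Hence $\pi_2$ is a proper surjection onto the irreducible scheme $U$ with irreducible equidimensional fibers, forcing $\widetilde{I}_\Lambda^{\mathrm{irr}}$ to be irreducible. The main technical obstacle — controlling the jump in the dimension of the fiber when $\Lambda$ meets $\Lambda_{123}$ — has already been absorbed into Lemmas \ref{one-two-three's}, \ref{3-quadrics-dim-estimate}, and \ref{no-malarkey}.
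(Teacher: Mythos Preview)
Your argument is correct and follows the same route as the paper's own proof: fix a generic triple $\Lambda_1,\Lambda_2,\Lambda_3$ with $\dim\Lambda_{123}=2k-3$, restrict to $M_\Lambda$, pass to the open locus $U=\{\Lambda:\Lambda\cap\Lambda_{123}=\emptyset\}$ using Lemma~\ref{no-malarkey}, and prove irreducibility of $\widetilde{I}_\Lambda^{\mathrm{irr}}$ via the $\pi_2$-fibration and Lemma~\ref{3-quadrics-dim-estimate}. Your extra step of first deducing $3$-transitivity from Lemma~\ref{bootstrap} with $m=2$ is a welcome clarification (the paper asserts directly that transitivity of $I_\Lambda^{\mathrm{irr}}\to M_\Lambda$ yields $4$-transitivity, which tacitly uses $3$-transitivity); two small points to tighten are that the fiber of $\pi_2$ over $\Lambda\in U\setminus\{\Lambda_i\}$ is the full product of projective spaces rather than merely an open in it, and you should remark (as the paper does) that $\pi$ is generically \'etale over $M_\Lambda$ so that the monodromy of the restricted cover embeds in $G_{[d]}$.
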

\begin{proof}
 Let $I, M_{[d]}, \pi$ be as in the setup, so that $G_{[d]}$ is the monodromy group of $\pi\colon I \to M_{[d]}$. Let $X \in M_{[d]}$ be a sufficiently general complete intersection, and let  $\Lambda_1, \Lambda_2, \Lambda_3 \subset X$ be $2k$-planes such that the following conditions are satisfied:
 \begin{enumerate}
 	\item $\pi$ is \'{e}tale over $X$,
 	\item\label{this-is-2}  $\Lambda_i \cap \Lambda_j = \emptyset$ for $i \neq j$,
 	\item\label{this-is-3}  $\Lambda_{123}\colonequals \Span (\Lambda_1, \Lambda_2) \cap \Span(\Lambda_2, \Lambda_3) \cap \Span(\Lambda_3, \Lambda_1)$ has dimension $2k-3$.
 \end{enumerate}

Conditions \ref{this-is-2} and \ref{this-is-3} hold for a general triple of $2k$-planes in $\PP^{5k+3}$ by Lemma \ref{one-two-three's}. Since there exists a quadric containing an arbitrary triple of $2k$-planes in $\PP^{5k+3}$, conditions \ref{this-is-2} and \ref{this-is-3} are satisfied for a general $X$. Let $M_\Lambda \subset M_{[d]}$ be the moduli space of quadrics containing $\Lambda_1, \Lambda_2$, and $\Lambda_3$. Since $X \in \M_\Lambda$, the covering $\pi$ restricted to $M_\Lambda$ is generically \'{e}tale. We now study the transitivity properties of the monodromy group of the covering $I_\Lambda \colonequals \pi^{-1}(M_\Lambda) \cap I \to M_\Lambda$ to show $4$-transitivity of $G_{[d]}$.

The schemes $\pi_2^{-1}(\Lambda_i) \cap I_\Lambda$ are irreducible components of $I_\Lambda $ that map isomorphically to $M_\Lambda$ by $\pi$. Let $I_\Lambda^{\mathrm{irr}}$ denote the union of irreducible components of $I_\Lambda \setminus \bigcup \left(\pi_2^{-1}(\Lambda_i) \cap I_\Lambda\right).$ The monodromy group of $I_\Lambda \to M_\Lambda$ is a subgroup of the pointwise stabilizer of a triple of points in $G_{[d]}$. To show that $G_{[d]}$ is $4$-transitive it suffices to show that the monodromy of $I_\Lambda^{\mathrm{irr}} \to M_\Lambda$ is transitive.
Let $U \subset \G(2k,n)$ be the open subscheme of subspaces $\Lambda$ such that $\Lambda \cap \Lambda_{123} = \emptyset$. Define $\widetilde{I}_\Lambda^{\mathrm{irr}}\colonequals \pi_2^{-1}(U) \cap I_\Lambda^{\mathrm{irr}}$. By Lemma \ref{no-malarkey} the coverings $I_\Lambda^{\mathrm{irr}} \to M_\Lambda$ and $\widetilde{I}_\Lambda^{\mathrm{irr}} \to M_\Lambda$ are isomorphic over the generic point of $M_\Lambda$. Therefore it suffices to show that $\widetilde{I}_\Lambda^{\mathrm{irr}}$ is irreducible. But the map $\pi_2\colon \widetilde{I}_\Lambda^{\mathrm{irr}} \to U$ is proper and dominant, and by Lemma \ref{3-quadrics-dim-estimate} its fibers are irreducible and equidimensional. Therefore $\widetilde{I}_\Lambda^{\mathrm{irr}}$ is irreducible, so $G_{[d]}$ is $4$-transitive. We conclude by Theorem \ref{CFSG} and Corollary \ref{NoMathieu} that $A_{N([d])}  \subset G_{[d]}$.
\end{proof}

\section{Intersections of two quadrics}\label{S:two-quadrics}
In this section we calculate the Fano monodromy group of the intersection of two quadrics, proving Theorem \ref{2-quadrics}.

We briefly describe the structure of the Fano scheme $F_k(Q_1 \cap Q_2)$ of a general intersection of two quadrics $Q_1, Q_2 \subset \PP^{2k+2}$, following \cite[Chapters~2~and~3]{Reid-thesis}. For the rest of the section, we assume that $\Char K \neq 2$. Suppose $Q_1, Q_2$ is a sufficiently general pair of quadratic forms. Then there exists a unique unordered $Q_1$-orthonormal basis $x_1, ..., x_{2k+3}\in H^0(\PP^{2k+3}, \calO(1))$ in which $Q_1$ and $Q_2$ are simultaneously diagonalized, so $Q_1$ and $Q_2$ are given by the equations $x_1^2+...+x_{2k+3}^2=0$ and $\lambda_1 x_1^2 + ... + \lambda_{2k+3} x_{2k+3}^2=0$ for some $\lambda_1, ..., \lambda_{2k+3} \in K$. Let $g_i \in \PGL_{2k+3}(K)$ denote the reflection along the hyperplane $x_i=0$. The group $G$ generated by $g_i$, for $i=1,..., 2k+3$ is isomorphic to $(\Z/2\Z)^{2k+2}$ and preserves $Q_1$ and  $Q_2$. The induced action of $G$ on $F_k(Q_1 \cap Q_2)$ is regular \cite[Theorem 3.14]{Reid-thesis}. In particular, $\deg F_k(Q_1 \cap Q_2)= 2^{2k+2}$ . By an \slantsf{incidence structure} of a collection of $k$-planes in $\PP^{2k+2}$, we mean the data of dimensions of their pairwise intersections. Fix a subspace $\Lambda \in F_k(Q_1 \cap Q_2)$. The incidence structure on $F_k(Q_1 \cap Q_2) = G \cdot \Lambda$ is determined by the following formula \cite[Theorem 3.14]{Reid-thesis}:
given an element $g=\prod_{i \in I} g_i = \prod_{i \notin I} g_i$ the intersection $\Lambda \cap g\Lambda$ has dimension $k-\min(\# I, 2k+3-\# I)$, with the convention $\dim \emptyset = -1$.

The Fano monodromy group has to preserve the incidence structure on $F_k(Q_1 \cap Q_2)$, which forces $G_{[d]}$ to be contained in the Weyl group $W(D_{2k+3})$ (alternatively: the lattice spanned by the classes of $k$-planes in the middle degree cohomology of $Q_1\cap Q_2$ can be proved to be of type $D_{2k+3}$; see \cite[Theorems~3.14~and~3.19]{Reid-thesis}). 
The group $W(D_{2k+3})$ is a semidirect product $\left(\Z/2\Z\right)^{2k+2} \rtimes S_{2k+3}$, where $S_{2k+3}$ acts on $\left(\Z/2\Z\right)^{2k+2}$ via the restriction of the permutation representation $S_{2k+3}\curvearrowright \F_2^{2k+3}$ to the sum-zero hyperplane. Let $\rho\colon W(D_{2k+3}) \to S_{2k+3}$ denote the projection. The surjection $\rho$ corresponds to the permutation of the coordinate hyperplanes $x_1, ..., x_{2k+3}$ induced by the symmetries of the incidence structure $F_k(Q_1 \cap Q_2).$ The Fano monodromy group $G_{[d]} = G_{(2,2)} \subset W(D_{2k+3})$ is transitive by Proposition \ref{transitive}. In Theorem \ref{Weyl.E.coyote} we show that $G_{[d]}=W(D_{2k+3})$ by showing that $\rho(G_{[d]})=S_{2k+3}$.

\begin{lemma}\label{2quad:surjectivity}
	Suppose $H \subset W(D_{2k+3})$ is a subgroup such that $\rho(H)=S_{2k+3}$ and $H$ acts transitively on $F_k(Q_1 \cap Q_2)$. Then  $H=W(D_{2k+3}).$
\end{lemma}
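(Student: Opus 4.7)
The plan is to exploit the semidirect product structure $W(D_{2k+3}) = N \rtimes S_{2k+3}$, where $N \cong (\Z/2\Z)^{2k+2}$ is identified with the sum-zero hyperplane in the permutation $S_{2k+3}$-module $\F_2^{2k+3}$, and to show that the hypothesis forces $N \subseteq H$. Since $N$ is normal and abelian, $H \cap N$ is stable under conjugation by $H$, and the induced action factors through $\rho(H) = S_{2k+3}$. Thus $H \cap N$ is an $\F_2[S_{2k+3}]$-submodule of $N$.

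The central step is to verify that $N$ is simple as an $\F_2[S_{2k+3}]$-module. Let $W \subseteq N$ be a nonzero invariant submodule, and let $v \in W$ be nonzero of even weight $2j$. After permuting coordinates one may assume $v = e_1 + \cdots + e_{2j}$; because $2j \leqslant 2k+2 < 2k+3$, the vector $v' = e_2 + \cdots + e_{2j+1}$ also lies in $W$, so $v + v' = e_1 + e_{2j+1} \in W$ is of weight two. Permuting this vector gives all weight-two vectors in $W$, and every element of $N$ is a sum of weight-two vectors, so $W = N$. (This is exactly the place where the hypothesis that $2k+3$ is odd and at least $3$ is used.) Therefore $H \cap N \in \{0, N\}$.

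The final step is to eliminate $H \cap N = 0$ by a divisibility argument. In that case $\rho$ restricts to an isomorphism $H \isomto S_{2k+3}$, so $|H| = (2k+3)!$; transitivity of $H$ on $F_k(Q_1 \cap Q_2)$, which has cardinality $2^{2k+2}$, then forces $2^{2k+2} \mid (2k+3)!$. By Legendre's formula, $v_2((2k+3)!) = (2k+3) - s_2(2k+3)$, where $s_2$ denotes the binary digit sum. Since $2k+3$ is odd and $\geqslant 3$, it is not a power of $2$, so $s_2(2k+3) \geqslant 2$, giving $v_2((2k+3)!) \leqslant 2k+1$, a contradiction. Hence $N \subseteq H$, and together with $\rho(H) = S_{2k+3}$ this yields $H = W(D_{2k+3})$.

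The main obstacle is the irreducibility of $N$, but this is a short self-contained module calculation; the transitivity hypothesis then enters only through the clean $2$-adic valuation comparison, so no further geometric input is needed.
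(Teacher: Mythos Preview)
Your proof is correct and follows essentially the same approach as the paper: both show that $H\cap N$ is an $S_{2k+3}$-submodule of the sum-zero hyperplane, prove that module is irreducible by reducing to weight-two vectors, and rule out $H\cap N=0$ via a $2$-adic divisibility obstruction against $(2k+3)!$. The only cosmetic differences are that you invoke Legendre's formula $v_2((2k+3)!)=(2k+3)-s_2(2k+3)$ where the paper bounds the series $\sum_i\lfloor(2k+3)/2^i\rfloor$ directly, and your irreducibility step uses a cyclic shift rather than a single transposition to produce a weight-two vector.
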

\begin{proof}
	Suppose $\ker \rho|_H = \{1 \}$, so $\rho\colon H \to S_{2k+3}$ is an isomorphism. Since $H$ acts transitively on a set of size $2^{2k+2}$, the number $2^{2k+2}$ should divide $\#S_{2k+3}$. But the maximal power of $2$ dividing $(2k+3)!$ is $k+1 + \lfloor\frac{k+1}{2}\rfloor+... < 2k+2$, contradiction. Therefore $\ker \rho|_H \neq \{1\}.$ Fix an element $1 \neq v \in H \cap \left(\Z/2\Z\right)^{2k+2}$ and choose a section $\phi$ of the map of sets $\rho\colon H \to S_{2k+3}$. Denote by $\cdot$ the action $S_{2k+3} \curvearrowright \left(\Z/2\Z\right)^{2k+2}$. For any $g \in S_{2k+3}$ the element $g \cdot v = \phi(g)v\phi(g)^{-1}$ belongs to $H$. Therefore $H$ contains the subgroup of $\left(\Z/2\Z\right)^{2k+2}$ generated by the orbit of a nontrivial element. A well-known fact, which we prove below, is that $\left(\Z/2\Z\right)^{2k+2}$ is an irreducible $S_{2k+3}$-module, and so $H$ contains the whole group $\left(\Z/2\Z\right)^{2k+2}.$
	
	We now show that the zero-sum hyperplane $V\colonequals (\Z/2\Z)^{2k+2}$ of the permutation module $(\Z/2\Z)^{2k+3}$ is an irreducible $S_{2k+3}$-module.  Consider a vector $0 \neq v=(v_1, \ldots, v_{2k+3}) \in V \subset (\Z/2\Z)^{2k+3}$. We need to show that $v$ generates $V$. Since $v$ is nonzero, there is a pair of indices $i,j$ with $v_i=0, v_j=1$. Then $v+(i,j)v$ is a vector with $i$ and $j$ coordinate equal to $1$ and the rest of coordinates equal to zero. Since $S_{2k+3}$ acts $2$-transitively, every vector that has exactly two nonzero coefficients is in the span of $S_{2k+3}v$. Finally, the vectors $(1,1,0,\ldots, 0), (0,1,1,0,\ldots, 0), (0,0,1,1,\ldots, 0), \ldots, (0,0,\ldots,1,1)$ form a basis of $V$. 
\end{proof}
\begin{theorem}\label{Weyl.E.coyote}
	The Fano monodromy group of a pair of quadrics is the full Weyl group, $G_{[d]}=G_{(2,2)}=W(D_{2k+3}).$
\end{theorem}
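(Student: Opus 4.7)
The plan is to apply Lemma \ref{2quad:surjectivity} to $H = G_{(2,2)}$. Transitivity is immediate from Proposition \ref{transitive}, so the main task is to prove $\rho(G_{(2,2)}) = S_{2k+3}$.

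First I would give a geometric interpretation of $\rho$: it is the monodromy of the degree-$(2k+3)$ cover $E \to M_{(2,2)}$ whose fiber over $(Q_1, Q_2)$ is the set of singular members of the pencil $\{\lambda Q_1 + \mu Q_2\}$. In the simultaneous-diagonalization basis $x_1,\ldots,x_{2k+3}$, the $i$th coordinate axis is the kernel of the unique singular pencil member degenerating in the direction $x_i$; the unordered basis is canonically attached to $(Q_1, Q_2)$, and $S_{2k+3} \subset W(D_{2k+3})$ acts by permuting these axes and, equivalently, the corresponding singular quadrics.

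To compute the monodromy of $E$, I would restrict to the subvariety $S \subset M_{(2,2)}$ of pairs $(I, A)$ with $A$ a symmetric $(2k+3) \times (2k+3)$ matrix. Over $S$, the singular pencil members correspond to roots of $\det(A - \lambda I)$, so $E|_S$ is the eigenvalue cover. The characteristic-polynomial map $S \to \mathbb{A}^{2k+3}$ is dominant onto the squarefree locus (diagonal matrices achieve every squarefree polynomial) with geometrically connected fibers: each fiber is a single $O_{2k+3}(K)$-conjugacy class, and although $O_{2k+3}(K)$ has two components in characteristic $\neq 2$, the stabilizer $\mu_2^{2k+3}$ of a diagonal matrix with distinct entries contains elements of determinant $-1$ and thus meets both components, so the orbit is connected. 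Consequently, the monodromy of $E|_S$ equals the $S_{2k+3}$ monodromy of the roots of the universal squarefree degree-$(2k+3)$ polynomial.

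Since pullback to a subvariety only shrinks monodromy, $\rho(G_{(2,2)}) \supseteq S_{2k+3}$, and hence $\rho(G_{(2,2)}) = S_{2k+3}$. Combined with transitivity, Lemma \ref{2quad:surjectivity} forces $G_{(2,2)} = W(D_{2k+3})$. The main conceptual step is the identification of $\rho$ with the monodromy of singular quadrics in the pencil; after that, the computation reduces to classical facts about monodromy of eigenvalues of a varying symmetric matrix, where the hypothesis $\Char K \neq 2$ plays its role in ensuring both the simultaneous diagonalizability and the connectedness of the orthogonal orbit.
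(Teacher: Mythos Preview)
Your overall architecture is the same as the paper's: reduce to Lemma~\ref{2quad:surjectivity} using Proposition~\ref{transitive} for transitivity, identify $\rho$ with the monodromy of the singular members of the pencil $\lambda Q_1+\mu Q_2$, and then show this pencil monodromy is the full symmetric group $S_{2k+3}$.

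The substantive difference is in that last step. The paper simply cites \cite{Cohen} for the fact that the Galois group of $\det(uQ_1+vQ_2)=0$ over $M_{(2,2)}$ is $S_{2k+3}$. You instead restrict to the slice $Q_1=I$ and argue directly: the characteristic-polynomial map from symmetric matrices to $\mathbb{A}^{2k+3}$ is dominant with geometrically irreducible general fiber (an $O_{2k+3}$-orbit of a diagonal matrix with distinct entries, connected because the stabilizer $\mu_2^{2k+3}$ meets both components of $O_{2k+3}$), so the eigenvalue cover inherits the full $S_{2k+3}$ monodromy of the universal squarefree polynomial. This is a correct and pleasantly self-contained replacement for the external reference; the orbit-connectedness step is exactly where $\Char K\neq 2$ and the odd dimension $2k+3$ enter.

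One point where the paper is more explicit than your sketch: the identification of $\rho(G_{(2,2)})$ with the pencil monodromy. You assert this from the pre-theorem discussion, which is fine, but the paper pins it down by constructing a rational map $I^{\Gal}\dashrightarrow\Delta^{\Gal}$ over $M$: an ordering of the $k$-planes on $Q_1\cap Q_2$ determines an ordering of the $\Lambda_i$ maximally incident to a chosen $\Lambda$, hence of the reflection hyperplanes $x_i=0$, hence of the roots of $\det(uQ_1+vQ_2)$. If you want your write-up to stand alone, you should include this factorization explicitly rather than leave it as a ``geometric interpretation.''
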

\begin{proof}
	Let $\Delta \subset M \times \PP^1$ be the subvariety of triples $(Q_1, Q_2, x)$ where $x$ is a root of the binary form $\det(uQ_1+vQ_2) \in K[u,v]$. The covering $\Delta \to M$ is generically \'etale and has Galois group $S_{2k+3}$ by \cite{Cohen}. Let $\Delta^{\Gal}$ denote the Galois closure of $\Delta \to M$; the scheme $\Delta^{\Gal}$ parameterizes pairs $Q_1, Q_2$ together with an ordering of the roots of $\det(uQ_1+vQ_2)$. We now relate this covering to the Fano covering $\pi$.
	
	Consider the scheme $I^{\mathrm{Gal}} \subset M \times \G(k, 2k+1)^{2^{2k+2}}$ parameterizing pairs of quadratic forms together with a collection of $2^{2k+2}$ distinct $k$-planes on $Q_1 \cap Q_2$. We want to show that the Galois group of $I^{\mathrm{Gal}} \to M$ is the full Weyl group $W(D_{2k+3})$ (in its regular action). For a sufficiently general pair $Q_1, Q_2$, an ordering of $k$-planes on $Q_1 \cap Q_2$ defines an ordering of the (a priori unordered) $Q_1$-orthonormal basis $x_1, ..., x_{2k+3} \in H^0(\PP^{2k+3},  \calO(1))$ in which $Q_1$ and $Q_2$ are simultaneously diagonalized as we now show. Consider the first $k$-plane $\Lambda$ and consider the ordering on the list of $k$-planes $\Lambda_1, ..., \Lambda_{2k+3}$ maximally incident to $\Lambda$. Since each $\Lambda_i$ is a reflection of $\Lambda$ with respect to a hyperplane $x_i=0$, the ordering $\Lambda_1, ..., \Lambda_{2k+3}$ defines an ordering of the basis $x_1, ..., x_{2k+3}$. Note that for all $i$ the $x_i^2$-coefficient of $Q_2$ is a root $\det(Q_1+xQ_2)$. Therefore there is a rational map $I^{\Gal} \dashrightarrow \Delta^{\Gal}$ such that after restricting to an open $U \subset M$ the covering $I^{\Gal}|_U \to U$  factors as $I^{\Gal}|_U \to \Delta^{\Gal}|_U \to U$. On the combinatorial side, the Weyl group is a semidirect product $1 \to (\Z/2\Z)^{2k+2} \to W(D_{2k+3}) \to S_{2k+3} \to 1$, where the surjection $\rho\colon W(D_{2k+3}) \to S_{2k+3}$ corresponds to the action of the Weyl group on the degenerate fibers of the pencil $uQ_1+vQ_2.$ Since the monodromy group of $\Delta^{Gal}\to M$ is the whole symmetric group, the group $G_{[d]}$ is a transitive subgroup of $W(D_{2k+3})$ such that $\rho$ restricted to $G_{[d]}$ is a surjection. By Lemma \ref{2quad:surjectivity} we have $G_{[d]}=W(D_{2k+3}).$
\end{proof}

\section*{Acknowledgments} 
We thank Brendan Hassett for suggesting the problem and for his remarks on Theorem \ref{Weyl.E.coyote}. We are grateful to Hannah Larson for her numerous comments on an earlier draft of the paper. Finally, we thank Bjorn Poonen for his advice on the computation in Lemma \ref{38406501359372282063949}.

\normalsize
\clearpage
\bibliographystyle{alpha}
\bibliography{bibliography}	

@book {SGA1,
 	author={Grothendieck, Alexander},
     TITLE = {Rev\^{e}tements \'{e}tales et groupe fondamental ({SGA} 1)},
    SERIES = {Documents Math\'{e}matiques (Paris) [Mathematical Documents
              (Paris)]},
    VOLUME = {3},
      NOTE = {S\'{e}minaire de g\'{e}om\'{e}trie alg\'{e}brique du Bois Marie 1960--61.
              [Algebraic Geometry Seminar of Bois Marie 1960-61],
              Directed by A. Grothendieck,
              With two papers by M. Raynaud,
              Updated and annotated reprint of the 1971 original [Lecture
              Notes in Math., 224, Springer, Berlin;  MR0354651 (50
              \#7129)]},
 PUBLISHER = {Soci\'{e}t\'{e} Math\'{e}matique de France, Paris},
      YEAR = {2003},
     PAGES = {xviii+327},
      ISBN = {2-85629-141-4},
   MRCLASS = {14E20 (14-06 14F35)},
  MRNUMBER = {2017446},
}

@article {Cohen,
    AUTHOR = {Cohen, Stephen D.},
     TITLE = {The {G}alois group of generic symmetric matrices},
   JOURNAL = {Portugal. Math.},
  FJOURNAL = {Portugaliae Mathematica},
    VOLUME = {47},
      YEAR = {1990},
    NUMBER = {3},
     PAGES = {319--329},
      ISSN = {0032-5155},
   MRCLASS = {12F10 (15A18)},
  MRNUMBER = {1090271},
MRREVIEWER = {Shizuka Sato},
}

@article {HarrisMonodromy,
    AUTHOR = {Harris, Joe},
     TITLE = {Galois groups of enumerative problems},
   JOURNAL = {Duke Math. J.},
  FJOURNAL = {Duke Mathematical Journal},
    VOLUME = {46},
      YEAR = {1979},
    NUMBER = {4},
     PAGES = {685--724},
      ISSN = {0012-7094},
   MRCLASS = {14N10},
  MRNUMBER = {552521},
}

@article {Erdos,
    AUTHOR = {Erd\H{o}s, P.},
     TITLE = {On a {D}iophantine equation},
   JOURNAL = {J. London Math. Soc.},
  FJOURNAL = {The Journal of the London Mathematical Society},
    VOLUME = {26},
      YEAR = {1951},
     PAGES = {176--178},
      ISSN = {0024-6107},
   MRCLASS = {10.0X},
  MRNUMBER = {41156},
       DOI = {10.1112/jlms/s1-26.3.176},
}

@article{HassettTschinkelThreeQuadrics,
    title={Varieties of planes on intersections of three quadrics},
    author={Brendan Hassett and Yuri Tschinkel},
    year={2020},
    Journal = {Eur. J. Math.},
    FJournal = { European Journal of Mathematics},
     }

@book{Cameron1999,
   author={Cameron, Peter J.},
   title={Permutation groups},
   series={London Mathematical Society Student Texts},
   volume={45},
   publisher={Cambridge University Press},
   place={Cambridge},
   year={1999},
   pages={x+220},
   isbn={0-521-65378-9},
   review={\MR{1721031 (2001c:20008)}},
}

@article {DebarreManivel,
    AUTHOR = {Debarre, Olivier and Manivel, Laurent},
     TITLE = {Sur la vari\'{e}t\'{e} des espaces lin\'{e}aires contenus dans une
              intersection compl\`ete},
   JOURNAL = {Math. Ann.},
  FJOURNAL = {Mathematische Annalen},
    VOLUME = {312},
      YEAR = {1998},
    NUMBER = {3},
     PAGES = {549--574},
      ISSN = {0025-5831},
   MRCLASS = {14M15 (14J40)},
  MRNUMBER = {1654757},
MRREVIEWER = {Yuri G. Prokhorov},
       DOI = {10.1007/s002080050235},
}

@book{Reid-thesis,
    author={Reid, Miles},
     title={The complete intersection of two or more quadrics},
      year={1972},
      note={Ph.D.\ thesis, Trinity College, Cambridge},
}

@manual{Github,
author= {Sachi Hashimoto and Borys Kadets},
title={Auxiliary files},
year={2020},
note={\url{https://github.com/sachihashimoto/FanoMonodromy}},
}
\end{document}